
\documentclass[12pt, twoside]{article}
\usepackage{amsmath,amsthm,amssymb}
\usepackage{times}
\usepackage{enumerate}


\usepackage[latin1]{inputenc}
\usepackage{amsmath} 
\usepackage{amsfonts}
\usepackage{amssymb}
\usepackage{stmaryrd}
\usepackage{latexsym} 
\usepackage{graphicx}
\usepackage{subfigure}
\usepackage{color}
\usepackage{hyperref}
\usepackage{verbatim}
\usepackage[all]{xy}
\usepackage{graphics}
\usepackage{pdfsync}
\usepackage{tikz}
\usetikzlibrary{fit,calc,positioning,decorations.pathreplacing,matrix}
\usepackage{url}


\pagestyle{myheadings}
\def\titlerunning#1{\gdef\titrun{#1}}
\makeatletter
\def\author#1{\gdef\autrun{\def\and{\unskip, }#1}\gdef\@author{#1}}
\def\address#1{{\def\and{\\\hspace*{18pt}}\renewcommand{\thefootnote}{}%
\footnote {#1}}%
\markboth{\autrun}{\titrun}}
\makeatother
\def\email#1{e-mail: #1}
\def\subjclass#1{{\renewcommand{\thefootnote}{}%
\footnote{\emph{Mathematics Subject Classification (2010):} #1}}}
\def\keywords#1{\par\medskip
\noindent\textbf{Keywords.} #1}


\newtheorem{theorem}{Theorem}[section]

\newtheorem{lemma}[theorem]{Lemma}



\theoremstyle{definition}
\newtheorem{definition}[theorem]{Definition}
\newtheorem{remark}[theorem]{Remark}
\newtheorem{example}[theorem]{Example}
\newtheorem{observation}[theorem]{Observation}



\numberwithin{equation}{section}

\frenchspacing

\textwidth=15cm
\textheight=23cm
\parindent=16pt
\oddsidemargin=-0.5cm
\evensidemargin=-0.5cm
\topmargin=-0.5cm




\newcommand{\bQ}{\mathbb{Q}}

\newcommand{\suchthat}{\;|\;}
\newcommand{\spam}{\operatorname{span}}

\newcommand{\sa}{{SA}}
\newcommand{\san}{{SA}_{n,n}}
\newcommand{\as}{{AS}}
\newcommand{\aseven}{{AS}_{n,n}}
\newcommand{\asodd}{{AS}_{n,n+1}}
\newcommand{\trt}{{TT}}
\newcommand{\trtn}{{TT}_{n,n,n}}
\newcommand{\ccf}{claw-contractible-free}
\newcommand{\cf}{claw-free}
\newcommand{\ep}{$e$-positive}



\newcommand{\coarsep}{\succcurlyeq _p}

\newlength\cellsize \setlength\cellsize{15\unitlength}
\savebox2{%
\begin{picture}(15,15)
\put(0,0){\line(1,0){15}}
\put(0,0){\line(0,1){15}}
\put(15,0){\line(0,1){15}}
\put(0,15){\line(1,0){15}}
\end{picture}}
\newcommand\cellify[1]{\def\thearg{#1}\def\nothing{}%
\ifx\thearg\nothing
\vrule width0pt height\cellsize depth0pt\else
\hbox to 0pt{\usebox2\hss}\fi%
\vbox to 15\unitlength{
\vss
\hbox to 15\unitlength{\hss$#1$\hss}
\vss}}
\newcommand\tableau[1]{\vtop{\let\\=\cr
\setlength\baselineskip{-16000pt}
\setlength\lineskiplimit{16000pt}
\setlength\lineskip{0pt}
\halign{&\cellify{##}\cr#1\crcr}}}
\savebox3{%
\begin{picture}(15,15)
\put(0,0){\line(1,0){15}}
\put(0,0){\line(0,1){15}}
\put(15,0){\line(0,1){15}}
\put(0,15){\line(1,0){15}}
\end{picture}}
\newcommand\expath[1]{%
\hbox to 0pt{\usebox3\hss}%
\vbox to 15\unitlength{
\vss
\hbox to 15\unitlength{\hss$#1$\hss}
\vss}}
\newcommand\bas[1]{\omit \vbox to \cellsize{ \vss \hbox to \cellsize{\hss$#1$\hss} \vss}}



\begin{document}


\baselineskip=17pt


\titlerunning{$e$-positivity of claw-contractible-free graphs}

\title{Resolving Stanley's $e$-positivity of claw-contractible-free graphs}

\author{Samantha Dahlberg
\and 
Ang\`{e}le Foley
\and
Stephanie van Willigenburg}

\date{}

\maketitle

\address{S. Dahlberg:  {
School of Mathematical and Statistical Sciences,
Arizona State University,
Tempe AZ 85287-1804, USA; \email{sdahlber@asu.edu}}
\and
A. Foley: Department of Physics and Computer Science, 
Wilfrid Laurier University, 
Waterloo ON N2L 3C5, Canada; \email{ahamel@wlu.ca}
\and
S. van Willigenburg: Department of Mathematics,
 University of British Columbia,
 Vancouver BC V6T 1Z2, Canada; \email{steph@math.ubc.ca} (corresponding author)}

\subjclass{Primary 05E05; Secondary 05C15, 05C25, 06A11, 16T30, 20C30}


\begin{abstract}
In Stanley's seminal 1995 paper on the chromatic symmetric function, he stated that there was no known graph that was not contractible to the claw and whose chromatic symmetric function was not $e$-positive, namely, not a positive linear combination of elementary symmetric functions. We resolve this by giving infinite families of graphs that are not contractible to the claw and whose chromatic symmetric functions are not $e$-positive. Moreover, one such family is additionally claw-free, thus establishing that the $e$-positivity of chromatic symmetric functions {is in general not dependent on the existence of an induced claw
or of a contraction to a claw.}

\keywords{Chromatic symmetric function,  claw-free, claw-contractible, elementary symmetric function}
\end{abstract}

\section{Introduction}\label{sec:intro}
The chromatic polynomial is a classical graph invariant dating back to Birkhoff \cite{Birkhoff}, while symmetric functions date back even further to Cauchy \cite{Cauchy}.  Both the chromatic polynomial and symmetric functions continue to
see wide application in diverse areas such as algebraic geometry \cite{Harada, SW} and quantum
physics \cite{Chmutov, Klazar}. In 1995 Stanley \cite{Stan95} introduced for a finite simple graph $G$ a symmetric function generalization of the chromatic polynomial of $G$, known as the chromatic
symmetric function, $X_G$. Stanley \cite{Stan_AG} further defined symmetric function generalizations of the Tutte
polynomial of a graph (in the guise of the bad colouring polynomial) and the
chromatic polynomial of a hypergraph. Noble and Welsh \cite{Noble} introduced the
$U$-polynomial as a different symmetric function generalization of the Tutte polynomial to Stanley's, but showed them to be equivalent. Some time before Stanley's  chromatic symmetric function was conceived, Brylawski had introduced
the polychromate \cite{Bryl} as a multivariate generalization of the Tutte polynomial;
this was later shown by Sarmiento \cite{Sarm} to be equivalent to the $U$-polynomial and
hence to Stanley's symmetric function generalization of the Tutte polynomial
too. 

Regarding applications, the chromatic symmetric function has been applied to scheduling problems via a generalization of Breuer and Klivans \cite{BK},  while the symmetric bad colouring polynomial arose in the study of the Potts model by Klazar et al. \cite{Klazar}. The chromatic symmetric function has also been shown to distinguish various non-isomorphic trees, for example \cite{Jose2, MMW},   {which explore Stanley's fundamental} question of whether $X_G$ distinguishes non-isomorphic trees, \cite[p. 170]{Stan95}. Although $X_G$ does not satisfy a deletion-contraction recurrence, Orellana and Scott established a three-term deletion recurrence for $X_G$ when $G$ has a triangle \cite{Orellana}. This was generalized to a $k$-term deletion recurrence for $X_G$ when $G$ has a $k$-cycle by the first and third authors \cite{Lollipops}.

Other quasisymmetric function generalizations of Stanley's chromatic symmetric  function have been defined by Humpert \cite{Humpert} and Shareshian and Wachs \cite{SW},
the latter having been further studied, for example \cite{Ath, BrosnanC, Clearman}. The generalization defined by Shareshian and Wachs \cite{SW} was in part motivated
by Stanley's conjecture \cite[Conjecture 5.1]{Stan95} that if a poset is $(\mbox{\textbf{3}}+\mbox{\textbf{1}})$-free, then
the chromatic symmetric function of its incomparability graph is $e$-positive,
that is, a non-negative linear combination of elementary symmetric functions.
Stanley observes that this conjecture is equivalent to the Stanley-Stembridge poset chain conjecture \cite[Conjecture 5.5]{StanStem}. The incomparability graph of a
$(\mbox{\textbf{3}}+\mbox{\textbf{1}})$-free poset is claw-free, that is, does not contain $K_{1,3}$ as an induced
subgraph. After formulating his conjecture, Stanley asks whether it could not
be widened from incomparability graphs to a larger class of graphs, and is led to
venture that it may hold for graphs not contractible to a claw (allowing removal
of any parallel edges). Stanley establishes the truth of this wider conjecture for
paths and cycles \cite[Propositions 5.3 and 5.4]{Stan95}.

Gebhard and Sagan proved a number of special cases of the conjecture via generalizing the chromatic symmetric function from commuting variables to non-commuting variables \cite{GebSag}. Gasharov \cite{Gasharov} moved closer to the conjecture by proving the weaker statement that the chromatic symmetric  function of the incomparability graph of
a $(\mbox{\textbf{3}}+\mbox{\textbf{1}})$-free poset is $s$-positive, that is, a non-negative linear combination of
Schur functions.  { Gasharov's work} has led to its own avenue of research, for example \cite{Kaliz, SWW}.

In this paper we produce infinitely many graphs not contractible to a claw (even
regarding multiple edges of a contraction as a single edge) but which are not
$e$-positive, thereby resolving Stanley's problem 
\begin{quote} \emph{We don't know of a graph which is not contractible to $K_{13}$ (even regarding multiple edges of a contraction as a single edge) which is not $e$-positive}
\end{quote}
in the negative. The original
conjecture for incomparability graphs of $(\mbox{\textbf{3}}+\mbox{\textbf{1}})$-free posets remains open, as
 {does} Stanley's question as to precisely which graphs have $e$-positive 
chromatic symmetric function.

Our paper is structured as follows. In Section~\ref{sec:background} we recall relevant concepts that we will require later, and give the four graphs with the fewest number of vertices that are not contractible to the claw and whose chromatic symmetric functions are not $e$-positive in Figure~\ref{fig:4counter}.  {Section~\ref{sec:eightfamilies} exhibits eight infinite families of graphs that are distinguished by whether or not they are claw-free, are not contractible to the claw, or have a chromatic symmetric function that is $e$-positive. This result is Theorem~\ref{the:no_connection} and thus establishes that the $e$-positivity of the chromatic symmetric function of a graph is independent of whether it is claw-free or claw-contractible-free. A summary of this theorem can be found in Table~\ref{tab:eightfamilies}.} Then in Section~\ref{sec:saltires} we  generalize two of  {the graphs from Section~\ref{sec:background}} into two infinite families of graphs. First is the family of saltire graphs, $\sa _{a,b}$, where $a,b\geq 2$, which generalizes the graph from  {Section~\ref{sec:background}} with the fewest edges. We prove that these graphs are not contractible to the claw in Lemma~\ref{lem:sa_ccf}, and additionally for $n\geq 3$ we prove that $X_{\san}$ is not $e$-positive in Lemma~\ref{lem:sa_notepos}.

Having discovered a family of graphs with an even number of vertices that resolves Stanley's {problem} we then extend our results to graphs with any number of vertices via the second family of augmented saltire graphs, $\as _{a,b}$, where $a\geq2, b\geq 3$, which we also prove are not contractible to the claw in Lemma~\ref{lem:as_ccf}. For $n\geq 3$   we further prove that $X_{\aseven}$ and $X_{\asodd}$ are not $e$-positive in Lemma~\ref{lem:as_notepos}.

Finally, in Section~\ref{sec:towers} we introduce the family of triangular tower graphs, $\trt _{a,b,c}$, where $a,b,c\geq 2$, and prove in Lemma~\ref{lem:trt_ccf} that they are claw-free and do not contract to the claw, and for $n\geq3$ prove in Lemma~\ref{lem:trt_notepos} that $X_{\trtn}$ is not $e$-positive. This last example shows that the chromatic symmetric
 function of a graph need not be $e$-positive for graphs that have
neither an induced claw nor a claw obtained by contraction of edges (allowing
removal of parallel edges). This final family allows us to establish that there exists an infinite family of graphs that satisfies every combination of claw-free,  not being contractible to the claw and whose chromatic symmetric function is  $e$-positive.  In all cases we see that classical techniques  suffice to yield our proofs, though a number of technical lemmas such as Lemma~\ref{lem:coeff_power_G_n,n,n} are required to yield the final results.

\begin{table}
\begin{center}
\begin{tabular}{|c|c|c|c|}
\hline
claw-free&\ccf\ &$e$-positive&example graph family\\
\hline
\hline
yes&yes&yes&path, cycle, complete\\
\hline
yes&yes&no&triangular tower $\trtn$, $n\geq 3$\\
\hline
yes&no&yes&generalized bull\\
\hline
yes&no&no&generalized net\\
\hline
no&yes&yes&tadpole $T_{a,b}$, $a\geq4$\\
\hline
no&yes&no&saltire $\san$, augmented\\
&&& saltire $\aseven, \asodd$, $n\geq 3$\\
\hline
no&no&yes&spider $S(n,n-1,1)$, $n\geq 2$\\
\hline
no&no&no&star\\
\hline
\end{tabular}
\end{center}
\caption{Infinite graph families that satisfy every combination of claw-free, \ccf\ and $e$-positive.  {Definitions for specific graphs can be found in Sections~\ref{sec:eightfamilies}, \ref{sec:saltires} and \ref{sec:towers}.}}
\label{tab:eightfamilies}
\end{table}

\section{Background}\label{sec:background}
We begin by recalling some necessary combinatorial, algebraic and graph theoretic results that will be useful later. A \emph{partition} $\lambda = (\lambda_1, \lambda_2, \ldots , \lambda_{\ell(\lambda)})$ of $N$, denoted by $\lambda \vdash N$, is a list of positive integers whose \emph{parts} $\lambda_i$ satisfy $\lambda_1\geq \lambda _2\geq \cdots \geq\lambda_{\ell(\lambda)} >0$ and $\sum _{i=1}^{\ell(\lambda)} \lambda _i = N$.  If $\lambda$ has exactly $m_i$ parts equal to $i$ for $1\leq i \leq N$ we often denote $\lambda$ by $\lambda = (1^{m_1}, 2^{m_2}, \ldots , N^{m_N})$.

The algebra of symmetric functions is a subalgebra of $\bQ [[ x_1, x_2, \ldots ]]$ that can be defined as follows. The \emph{$i$-th elementary symmetric function} $e_i$ for $i\geq 1$ is given by
$$e_i = \sum _{j_1<j_2<\cdots < j_i} x_{j_1}x_{j_2}\cdots x_{j_i}$$and given a partition $\lambda = (\lambda _1, \lambda_2, \ldots , \lambda _{\ell(\lambda)})$  the \emph{elementary symmetric function} $e_\lambda$ is given by
$$e_\lambda = e_{\lambda _1}e_{\lambda _2}\cdots e_{\lambda _{\ell(\lambda)}}.$$The \emph{algebra of symmetric functions}, $\Lambda$, is then the graded algebra
$$\Lambda = \Lambda ^0 \oplus \Lambda ^1 \oplus \cdots$$where $\Lambda ^0 = \spam \{1\} = \bQ$ and for $N\geq 1$
$$\Lambda ^N = \spam \{e_\lambda \suchthat \lambda \vdash N\}.$$Moreover, the elementary symmetric functions form a basis for $\Lambda$ and if a symmetric function can be written as a non-negative linear combination of elementary symmetric functions, then we say it is \emph{$e$-positive}.

However, while the basis of elementary symmetric functions is central to the {problem} we wish to resolve, it is another basis, the basis of power sum symmetric functions, which will be central to our proofs. In terms of elementary symmetric functions the \emph{$i$-th power sum symmetric function} $p_i$ for $i\geq 1$ is given by
\begin{equation}
p_i= \sum_{\mu=(1^{m_1},2^{m_2},\ldots ,i^{m_i}) \vdash i}(-1)^{i-\ell(\mu)}\frac{i(\ell(\mu)-1)!}{\prod_{j=1}^{i}m_j!}e_{\mu}
\label{eq:newton}
\end{equation}and given a partition $\lambda = (\lambda _1, \lambda_2, \ldots , \lambda _{\ell(\lambda)})$  the \emph{power sum symmetric function} $p_\lambda$ is given by
$$p_\lambda = p_{\lambda _1}p_{\lambda _2}\cdots p_{\lambda _{\ell(\lambda)}}.$$This particular basis, of  power sum symmetric functions, will be useful later, as will the following. Given two partitions $\lambda, \mu \vdash N$ we write $\lambda \coarsep \mu$ if the parts of $\lambda$ are obtained by summing (not necessarily adjacent) parts of $\mu$. For example, $(5,4,2)\coarsep (3,3,2,2,1)$ since $5=3+2$, $4=3+1$ and $2=2$. Therefore by Equation~\eqref{eq:newton} we get the following key observation.

\begin{observation}\label{obs:newton}
When calculating the coefficient of $e_\mu$ in a symmetric function written in the basis of power sum symmetric functions, we need only focus on those $p_\lambda$ where $\lambda \coarsep \mu$.
\end{observation}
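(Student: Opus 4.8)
The plan is to expand an arbitrary symmetric function $f$, written in the power sum basis as $f = \sum_\lambda c_\lambda p_\lambda$, into the elementary basis, and to track which $p_\lambda$ can possibly contribute a nonzero multiple of $e_\mu$. The only input needed is Equation~\eqref{eq:newton}, which expresses each single power sum $p_i$ as an explicit linear combination $\sum_{\nu \vdash i} a^{(i)}_\nu e_\nu$ of elementary symmetric functions indexed by partitions of $i$.

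First I would expand a general $p_\lambda = p_{\lambda_1} p_{\lambda_2} \cdots p_{\lambda_{\ell(\lambda)}}$ by substituting \eqref{eq:newton} into each factor and distributing the product. A typical resulting summand is a scalar multiple of $e_{\nu^{(1)}} e_{\nu^{(2)}} \cdots e_{\nu^{(\ell(\lambda))}}$, where $\nu^{(j)} \vdash \lambda_j$ for each $j$. Since the elementary symmetric functions satisfy $e_\alpha e_\beta = e_{\alpha \cup \beta}$, where $\alpha \cup \beta$ denotes the partition whose multiset of parts is the union of those of $\alpha$ and $\beta$, each such summand is a scalar multiple of $e_\nu$ with $\nu = \nu^{(1)} \cup \cdots \cup \nu^{(\ell(\lambda))}$.

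Next I would observe that any $\nu$ arising in this way satisfies $\lambda \coarsep \nu$: the parts of $\nu$ break into the blocks coming from $\nu^{(1)}, \ldots, \nu^{(\ell(\lambda))}$, and by construction the parts in the $j$-th block sum to $\lambda_j$, which is precisely the statement that each part of $\lambda$ is a sum of (not necessarily adjacent) parts of $\nu$. Hence the only partitions $\mu$ for which $e_\mu$ can appear with nonzero coefficient in the elementary expansion of $p_\lambda$ are those with $\lambda \coarsep \mu$. Collecting contributions over all $\lambda$, the coefficient of $e_\mu$ in $f = \sum_\lambda c_\lambda p_\lambda$ is a sum of terms indexed only by partitions $\lambda$ with $\lambda \coarsep \mu$, which is the assertion.

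This argument is essentially bookkeeping, and I do not anticipate a genuine obstacle. The one point deserving a moment's care is the identification of the combinatorial condition ``$\nu$ is a union of partitions of the parts of $\lambda$'' with the relation $\lambda \coarsep \mu$ as defined in the text; it is also worth noting that only the one-directional implication is needed, so even an overcount of the relevant $p_\lambda$ would yield a valid conclusion.
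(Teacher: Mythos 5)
Your proposal is correct and is essentially the argument the paper leaves implicit when it derives Observation~\ref{obs:newton} directly from Equation~\eqref{eq:newton}: substitute the expansion of each $p_{\lambda_j}$ over partitions of $\lambda_j$, multiply using $e_\alpha e_\beta = e_{\alpha\cup\beta}$, and note that every resulting index $\nu$ satisfies $\lambda \coarsep \nu$. Your closing remark that only the one-directional implication is needed is also exactly the right level of care here.
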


Since we will often want to compute the coefficient of a symmetric function $f\in \Lambda$ when written in the basis $\{b_\lambda \} _{\lambda \vdash N \geq 1}$, we will denote this by $[b_\lambda]f$. More details on these classical symmetric functions can be found in texts such as \cite{MacD, Sagan, ECII}, but for now we turn our attention to a more recent symmetric function, the chromatic symmetric function.

The chromatic symmetric function is reliant on a graph that is \emph{finite} and \emph{simple} and from now on we will assume that all our graphs satisfy these properties. This function is also reliant on all proper colourings of a graph. More precisely, given a graph, $G$, with vertex set $V$ a \emph{proper colouring} $\kappa$ of $G$ is a function
$$\kappa : V\rightarrow \{1,2,\ldots\}$$such that if  {$u, v \in V$} are adjacent, then  {$\kappa(u) \neq \kappa(v)$.} With this in mind we can now define the chromatic symmetric function, which we do in two ways before giving an example in Example~\ref{ex:claw}.

\begin{definition}\cite[Definition 2.1]{Stan95}\label{def:chromsym} For a graph $G$ with vertex set $V=\{v_1, v_2, \ldots, v_N\}$ and edge set $E$, the \emph{chromatic symmetric function} is defined to be
$$X_G = \sum _\kappa x_{\kappa(v_1)}x_{\kappa(v_2)}\cdots x_{\kappa(v_N)}$$
where the sum is over all proper colourings $\kappa$ of $G$. \end{definition}

{For brevity, we say a graph $G$ is \emph{$e$-positive} when $X_G$ is $e$-positive and \emph{not $e$-positive} otherwise.}

A more useful realisation of the chromatic symmetric function for us will be the following lemma, also due to Stanley, which requires some more notation. Given a graph $G$  with vertex set $V=\{v_1, v_2, \dots, v_N\}$, edge set $E$, and a subset $S\subseteq E$, let $\lambda(S)$ be the partition of $N$ whose parts are equal to the number of vertices in the connected components of the spanning subgraph of $G$ with vertex set $V$ and edge set $S$. If the number of vertices in a connected component is $\lambda _i$, then for succinctness we may refer to the connected component as a \emph{piece of size $\lambda_i$}.

\begin{lemma}\label{lem:stanley}\cite[Theorem 2.5]{Stan95} For a graph $G$ with vertex set $V$ and  edge set $E$ we have that
$$X_G=\sum_{S\subseteq E} (-1)^{|S|} p_{\lambda(S)}.$$
\end{lemma}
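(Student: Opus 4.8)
The plan is to derive this classical identity directly from Definition~\ref{def:chromsym} by inclusion--exclusion over the set of monochromatic edges of an arbitrary colouring. Write $V=\{v_1,\ldots,v_N\}$, and for an arbitrary function $\kappa:V\to\{1,2,\ldots\}$ (not necessarily proper) let $B(\kappa)\subseteq E$ be the set of edges both of whose endpoints receive the same colour under $\kappa$; thus $\kappa$ is a proper colouring exactly when $B(\kappa)=\emptyset$. Using the elementary identity that $\sum_{S\subseteq T}(-1)^{|S|}$ equals $1$ when $T=\emptyset$ and $0$ otherwise, I would rewrite
$$X_G=\sum_{\kappa\,:\,B(\kappa)=\emptyset}x_{\kappa(v_1)}\cdots x_{\kappa(v_N)}=\sum_{\kappa}\;\sum_{S\subseteq B(\kappa)}(-1)^{|S|}\,x_{\kappa(v_1)}\cdots x_{\kappa(v_N)},$$
where the outer sum on the right now ranges over all functions $\kappa:V\to\{1,2,\ldots\}$.

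Next I would swap the order of summation, collecting terms by the set $S$: the pairs $(\kappa,S)$ with $S\subseteq B(\kappa)$ are precisely the pairs with $S\subseteq E$ and $\kappa$ monochromatic on every edge of $S$, which gives
$$X_G=\sum_{S\subseteq E}(-1)^{|S|}\sum_{\kappa\,:\,S\subseteq B(\kappa)}x_{\kappa(v_1)}\cdots x_{\kappa(v_N)}.$$
The final step is to recognise the inner sum. A colouring $\kappa$ satisfies $S\subseteq B(\kappa)$ exactly when it is constant on each connected component of the spanning subgraph with vertex set $V$ and edge set $S$; such $\kappa$ are obtained by choosing a colour freely and independently for each of these components. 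If the components have sizes $\lambda(S)=(\lambda_1,\ldots,\lambda_\ell)$, then the monomial attached to $\kappa$ is $\prod_{i=1}^{\ell}x_{c_i}^{\lambda_i}$ where $c_i$ is the colour of the $i$-th component, and summing over all choices factors as $\prod_{i=1}^{\ell}\bigl(\sum_{c\geq 1}x_c^{\lambda_i}\bigr)=p_{\lambda(S)}$, using the classical power-sum expansion $p_k=\sum_{c\geq 1}x_c^k$ (equivalent to Equation~\eqref{eq:newton} via Newton's identities). Substituting this back yields $X_G=\sum_{S\subseteq E}(-1)^{|S|}p_{\lambda(S)}$.

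I do not expect a genuine obstacle here; the proof is short once the inclusion--exclusion is set up correctly. The main point requiring care is that every sum over colourings is infinite, so I would remark that these are identities in the ring of formal power series $\bQ[[x_1,x_2,\ldots]]$ — each monomial of fixed total degree arises from only finitely many colourings — which legitimises the interchange of summation, and that the factorisation in the last step is the usual manipulation of formal products of power series. Everything else is bookkeeping.
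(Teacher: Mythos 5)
Your argument is correct and is precisely the classical inclusion--exclusion proof that Stanley gives for \cite[Theorem 2.5]{Stan95}; the paper itself states this lemma by citation only, so there is nothing to diverge from. The two points needing care --- that the interchange of summation is legitimate coefficient-by-coefficient in $\bQ[[x_1,x_2,\ldots]]$, and that the colourings with $S\subseteq B(\kappa)$ are exactly those constant on the components of $(V,S)$, whence the inner sum factors as $\prod_i\sum_{c\geq 1}x_c^{\lambda_i}=p_{\lambda(S)}$ --- are both handled properly.
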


\begin{example}\label{ex:claw} The \emph{claw} (also known as $K_{1,3}$ or, as in Stanley's quote, $K_{13}$) shown 
 {in Figure~\ref{fig:claw}}
 has chromatic symmetric function
$$p_{(1^4)}-3p_{(2,1^2)}+3p_{(3,1)}-p_{(4)}=e_{(2,1^2)}-2e_{(2,2)}+5e_{(3,1)}+4e_{(4)}.$$
Thus, the claw is not $e$-positive.\end{example}

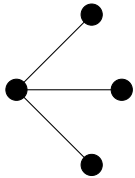
\begin{figure}[h]
\begin{center}
\begin{tikzpicture}[scale=1]
\coordinate (1) at (0,0);
\coordinate (2) at (1,1);
\coordinate (3) at (1.4,0);
\coordinate (4) at (1,-1);
\filldraw [black] (1) circle (4pt);
\filldraw [black] (2) circle (4pt);
\filldraw [black] (3) circle (4pt);
\filldraw [black] (4) circle (4pt);
\draw[->] (1)--(2);
\draw[->] (1)--(3);
\draw[->] (1)--(4);
\end{tikzpicture}
\end{center}
\caption{The claw.}
\label{fig:claw}
\end{figure}


While the focus of our paper will be on the claw, a number of well known graphs will also play a role: The  {\emph{complete graph}}, $K_N, N\geq 1$; the \emph{$N$-path}, $P_N, N\geq 1$; and the \emph{$N$-cycle}, $C_N, N\geq 3$, and we set $C_i=K_i$ for $i=1,2$. Two particular claw related properties of a graph will also be much in demand, the property of being claw-free and of being \ccf.

For the former recall that an \emph{induced} subgraph of a graph $G$ is a subgraph consisting of a subset of its vertices together with all edges whose endpoints both lie in the subset, and an \emph{edge induced} subgraph of a graph $G$ is a subgraph consisting of a subset of its edges together with all vertices at the endpoints of  {some} edge in the subset.

We say a graph is \emph{claw-free} if it does not have the claw as an induced subgraph. Claw-free graphs have the following characterization due to Beineke \cite[Theorem]{Beineke} who combined the results of Krausz \cite[Theorem 1]{Krausz} and van Rooij and Wilf \cite[Theorem 4]{vRW}.

\begin{lemma}\label{lem:beineke}\cite[Theorem]{Beineke}
A graph $G$ is claw-free  {if} there exists a partition of the edges of $G$ into disjoint sets, such that every set edge induces a complete subgraph of $G$ and no vertex  of $G$ belongs to more than 2 of these complete subgraphs.
\end{lemma}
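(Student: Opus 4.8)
The plan is to prove the two directions of the biconditional separately. The ``if'' direction is a short counting argument, while the ``only if'' direction carries the weight and is obtained by combining the structural results of Krausz \cite{Krausz} and of van Rooij and Wilf \cite{vRW}, as in Beineke \cite{Beineke}.

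For the ``if'' direction, suppose the edges of $G$ decompose as $E = C_1 \sqcup \cdots \sqcup C_k$, where the edges of each $C_i$ induce a complete subgraph of $G$ and no vertex of $G$ lies in more than two of these complete subgraphs. If $G$ were not claw-free it would contain an induced claw with centre $v$ and leaves $a,b,c$, so $va, vb, vc \in E$ while $ab, ac, bc \notin E$. No two of $va, vb, vc$ can lie in a common part: if $va, vb \in C_i$, then since the edges of $C_i$ induce a complete subgraph, $a$ and $b$ would be adjacent, contradicting that $\{a,b,c\}$ is independent. Hence $va, vb, vc$ lie in three distinct parts, each of which contains $v$, so $v$ belongs to at least three of the complete subgraphs, a contradiction. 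Thus $G$ is claw-free.

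For the ``only if'' direction, suppose $G$ is claw-free. Then for every vertex $v$ the induced graph $G[N(v)]$ has no independent set of size three, equivalently $\overline{G[N(v)]}$ is triangle-free, so the local structure at each vertex is already severely constrained. The plan is to promote this local data to a global clique partition: one makes a canonical choice of complete subgraphs covering the edges incident to each vertex and checks that these choices are mutually consistent; equivalently, one realises $G$ as a line graph $L(H)$ and takes, for each vertex $x$ of $H$, the set of edges of $H$ incident with $x$, which forms a complete subgraph of $L(H)=G$, so that every vertex of $G$---being an edge of $H$---lies in exactly the two cliques indexed by its two endpoints, up to the standard small-case caveats. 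The passage from claw-freeness to this description is precisely the forbidden-configuration analysis of van Rooij and Wilf, and assembling it with Krausz's clique decomposition yields the stated partition.

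I expect the main obstacle to be exactly this ``only if'' direction. Claw-freeness is an inherently local hypothesis---it bounds the independence numbers of neighbourhoods---and upgrading it to a global ``each vertex in at most two cliques'' decomposition is the delicate step, which is why it relies on the cited structural theory rather than an elementary argument. The ``if'' direction, by contrast, is the elementary counting above, and is the form in which we shall apply the lemma: to certify that a given graph is claw-free, it suffices to exhibit one such edge partition.
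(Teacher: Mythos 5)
The paper itself gives no proof of this lemma: it is quoted directly from Beineke, who combined Krausz's clique-partition characterization of line graphs with van Rooij and Wilf's forbidden-subgraph characterization. So the only substantive comparison to make is with your own argument. Your ``if'' direction is correct and complete: if $va,vb,vc$ were the edges of an induced claw, no two of them could share a part of the partition (else two leaves would be adjacent), forcing $v$ into three distinct cliques. This is also the only direction the paper ever uses (in Lemma~\ref{lem:trt_ccf}, to certify claw-freeness by exhibiting a partition), so for the purposes of the paper your proposal suffices.

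The ``only if'' direction of your proposal, however, has a genuine gap. You propose to realise an arbitrary claw-free graph $G$ as a line graph $L(H)$ and then take the Krausz partition indexed by the vertices of $H$. But claw-freeness alone does not imply that $G$ is a line graph: van Rooij and Wilf's theorem characterizes line graphs as graphs that are claw-free \emph{and} satisfy an additional condition on pairs of odd triangles sharing an edge, and dropping that second condition breaks the equivalence. Concretely, the wheel $W_5$ (a $5$-cycle plus a hub adjacent to all five rim vertices) is claw-free, yet any clique of $W_5$ containing the hub covers at most two of the hub's five incident edges, so no edge partition into cliques can avoid putting the hub in at least three of them; $W_5$ is one of Beineke's nine minimal non-line graphs. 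Thus the route you sketch cannot close, and indeed the biconditional as stated conflates ``claw-free'' with ``line graph''; only the implication from the existence of such a partition to claw-freeness is safe, which is fortunately the implication the paper relies on.
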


For the latter, let $G$ be a graph with vertex set $V$ and edge set $E$. Recall a subset of $V$ is \emph{independent} if no two vertices in the subset are adjacent, plus when we delete a vertex $v$ from $G$ we delete  $v$ and all edges incident to $v$. Furthermore, if $S\subseteq E$, then we denote by $G/S$ the graph $G$ with the edges in $S$ contracted and the vertices at either end identified. We say that $G$ contracts to the claw if there exists $S\subseteq E$ such that $G/S$ yields the claw once multiple edges are replaced by single edges, and $G$ is \emph{\ccf}\ if $G$ does not contract to the claw. 

As with being claw-free, an elegant characterization exists for a graph to be \ccf. It is dependent on deleting  independent sets of vertices, and is a special case of a theorem by Brouwer and Veldman \cite[Theorem 3]{BV}.

\begin{lemma}\label{lem:BV}\cite[Theorem 3]{BV} A graph $G$ is \ccf\ if and only if the deletion of  {every} set of 3 independent vertices from $G$ results in a disconnected graph.
\end{lemma}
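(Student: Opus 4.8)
The plan is to prove both implications by contraposition, using spanning trees both to realise and to obstruct claw-contractions. Throughout we may assume $G$ is connected: a disconnected graph cannot contract to the connected graph $K_{1,3}$, so it is automatically \ccf, and this is the only regime relevant to our applications. Write $V$ for the vertex set of $G$.

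For the implication ``$\Leftarrow$'' (every $3$-independent-set deletion disconnects $\Rightarrow$ $G$ is \ccf), suppose for contradiction that $G$ contracts to the claw, say $G/S$ has the claw $K_{1,3}$ as its underlying simple graph for some $S\subseteq E$. Then the spanning subgraph $(V,S)$ has exactly four connected components $V_c,V_1,V_2,V_3$, each inducing (via the edges of $S$) a connected subgraph of $G$, where $V_c$ corresponds to the centre of the claw and $V_1,V_2,V_3$ to its leaves. Since the leaves are pairwise non-adjacent in $G/S$, there is no edge of $G$ between $V_i$ and $V_j$ for $i\neq j$; since each leaf is adjacent to the centre, for each $i$ there is an edge $\{a_i,b_i\}$ of $G$ with $a_i\in V_i$ and $b_i\in V_c$. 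For each $i$ fix a spanning tree $T_i$ of $G[V_i]$; if $|V_i|\geq 2$ choose $v_i$ to be a leaf of $T_i$ distinct from $a_i$ (such a leaf exists, since a tree on at least two vertices has at least two leaves), and if $|V_i|=1$ set $v_i=a_i$. Then $\{v_1,v_2,v_3\}$ is an independent set of $G$ (the $v_i$ lie in distinct components between which there are no edges of $G$), and $G-\{v_1,v_2,v_3\}$ is connected: $G[V_c]$ is connected, and for each $i$ with $|V_i|\geq 2$ the set $V_i\setminus\{v_i\}$ still induces a connected subgraph (a tree minus a leaf) which still contains $a_i$ and hence is joined to $V_c$ through $\{a_i,b_i\}$, while the $i$ with $|V_i|=1$ contribute nothing after deletion. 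This contradicts the hypothesis, so $G$ is \ccf.

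For the implication ``$\Rightarrow$'' we prove the contrapositive: if some independent set $\{u,v,w\}\subseteq V$ has $G-\{u,v,w\}$ connected, then $G$ contracts to the claw. Since $G$ is connected with an independent set of size $3$, we have $|V|\geq 4$, so $V\setminus\{u,v,w\}\neq\emptyset$. Let $S$ be the edge set of a spanning tree of the connected graph $G-\{u,v,w\}$, and contract $S$: in $G/S$ the whole set $V\setminus\{u,v,w\}$ collapses to a single vertex $x$, giving the four vertices $x,u,v,w$. No edge of $G$ joins two of $u,v,w$ (independence); each of $u,v,w$, being a vertex of the connected graph $G$ on $\geq 2$ vertices, has a neighbour, which necessarily lies in $V\setminus\{u,v,w\}$ and so is identified with $x$; and the edges within $V\setminus\{u,v,w\}$ not in $S$ become loops at $x$, which are discarded. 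Thus the underlying simple graph of $G/S$ is exactly $K_{1,3}$, so $G$ is not \ccf.

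The main obstacle is the choice of deleted vertices in the ``$\Leftarrow$'' direction: one must remove one vertex from each leaf-part $V_i$ so that what remains is simultaneously still connected within $V_i$ and still attached to the centre part $V_c$, which is precisely why $v_i$ is taken to be a leaf of the spanning tree $T_i$ other than the ``attachment vertex'' $a_i$, with singleton parts handled separately. The remaining points are routine bookkeeping: verifying that $(V,S)$ has precisely four components, and adopting the standard convention that loops and repeated edges arising from a contraction are deleted, so that ``$G/S$ yields the claw'' is read as an equality of underlying simple graphs, consistent with Stanley's parenthetical remark.
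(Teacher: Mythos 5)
The paper offers no proof of this lemma at all---it is quoted as a special case of Brouwer and Veldman's Theorem 3---so there is nothing internal to compare against; what you have written is a correct, self-contained argument for the result, and both directions check out. Your forward direction (contract all of $G-\{u,v,w\}$ to a single vertex) and backward direction (from a claw-contraction, delete one carefully chosen vertex from each leaf-part) are the natural proofs, and the one genuinely delicate point---taking $v_i$ to be a leaf of a spanning tree of $G[V_i]$ other than the attachment vertex $a_i$, with singleton parts handled separately---is dealt with correctly. Two remarks. First, your opening reduction ``we may assume $G$ is connected'' is harmless for the ``$\Leftarrow$'' implication but is doing real work in ``$\Rightarrow$'': for disconnected $G$ the stated equivalence is actually false (for instance, $K_5$ together with three isolated vertices is \ccf, since contraction preserves the number of components, yet deleting those three independent vertices leaves the connected graph $K_5$), so connectivity is a genuine hypothesis of the Brouwer--Veldman theorem rather than something assumable without loss of generality. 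Since the paper restricts to connected graphs immediately after invoking this lemma, this is a defect of the statement as transcribed rather than of your proof, but you should say explicitly that the equivalence is asserted only for connected $G$. Second, you can avoid appealing to a loop-deletion convention (which the paper's definition of contraction does not spell out---it only mentions merging multiple edges) by taking $S$ to be \emph{all} edges of $G$ with both endpoints in $V\setminus\{u,v,w\}$ rather than just a spanning tree: the contraction then produces no loops, and only the multiple-edge identification that the definition explicitly permits is needed.
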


We now turn our attention to the graphs with the fewest number of vertices that are \ccf\ and not $e$-positive. Since a disconnected graph cannot contract to the claw, we restrict ourselves to \emph{connected} graphs in order to yield a meaningful resolution to Stanley's  {problem}. 

Otherwise,  by \cite[Propositon 2.3]{Stan95}, which says for disjoint graphs $G,H$ we have that
\begin{equation}\label{eq:XGXH}X_{G\cup H}=X_GX_H,\end{equation}calculating $X_{K_1}=e_1$, and Example~\ref{ex:claw}, we can conclude that the disjoint union of the claw and $K_1$ is a trivial resolution to Stanley's  {problem}.

Using an exhaustive computational search, the connected graphs with the fewest number of vertices, $N$, that are \ccf\ and not $e$-positive occur at $N=6$ and are given in Figure~\ref{fig:4counter}.

\begin{figure}[h]\label{fig:4counter}
\begin{center}
\begin{tikzpicture}[scale=1]
\begin{scope}[shift={(0,0)}]
\coordinate (1) at (0,0);
\coordinate (2) at (.8,1.25);
\coordinate (3) at (2.2,1.25);
\coordinate (4) at (3,0);
\coordinate (5) at (2.2,-1.25);
\coordinate (6) at  (.8,-1.25);
\filldraw [black] (1) circle (4pt);
\filldraw [black] (2) circle (4pt);
\filldraw [black] (3) circle (4pt);
\filldraw [black] (4) circle (4pt);
\filldraw [black] (5) circle (4pt);
\filldraw [black] (6) circle (4pt);
\draw[->] (2)--(5);
\draw[->] (3)--(6);
\draw[->] (1)--(2)--(3)--(4)--(5)--(6)--(1);
\end{scope}
\begin{scope}[shift={(4,0)}]
\coordinate (1) at (0,0);
\coordinate (2) at (.8,1.25);
\coordinate (3) at (2.2,1.25);
\coordinate (4) at (3,0);
\coordinate (5) at (2.2,-1.25);
\coordinate (6) at  (.8,-1.25);
\filldraw [black] (1) circle (4pt);
\filldraw [black] (2) circle (4pt);
\filldraw [black] (3) circle (4pt);
\filldraw [black] (4) circle (4pt);
\filldraw [black] (5) circle (4pt);
\filldraw [black] (6) circle (4pt);
\draw[->] (2)--(5);
\draw[->] (3)--(6);
\draw[->] (3)--(5);
\draw[->] (1)--(2)--(3)--(4)--(5)--(6)--(1);
\end{scope}
\begin{scope}[shift={(8,0)}]
\coordinate (1) at (0,0);
\coordinate (2) at (.8,1.25);
\coordinate (3) at (2.2,1.25);
\coordinate (4) at (3,0);
\coordinate (5) at (2.2,-1.25);
\coordinate (6) at  (.8,-1.25);
\filldraw [black] (1) circle (4pt);
\filldraw [black] (2) circle (4pt);
\filldraw [black] (3) circle (4pt);
\filldraw [black] (4) circle (4pt);
\filldraw [black] (5) circle (4pt);
\filldraw [black] (6) circle (4pt);
\draw[->] (2)--(5);
\draw[->] (3)--(6);
\draw[->] (1)--(4);
\draw[->] (1)--(2)--(3)--(4)--(5)--(6)--(1);
\end{scope}
\begin{scope}[shift={(12,0)}]
\coordinate (1) at (0,0);
\coordinate (2) at (.8,1.25);
\coordinate (3) at (2.2,1.25);
\coordinate (4) at (3,0);
\coordinate (5) at (2.2,-1.25);
\coordinate (6) at  (.8,-1.25);
\filldraw [black] (1) circle (4pt);
\filldraw [black] (2) circle (4pt);
\filldraw [black] (3) circle (4pt);
\filldraw [black] (4) circle (4pt);
\filldraw [black] (5) circle (4pt);
\filldraw [black] (6) circle (4pt);
\draw[->] (2)--(5);
\draw[->] (3)--(6);
\draw[->] (3)--(5);
\draw[->] (1)--(4);
\draw[->] (1)--(2)--(3)--(4)--(5)--(6)--(1);
\end{scope}
\end{tikzpicture}
\end{center}
\caption{From left to right the graphs $\sa _{3,3}, \as _{3,3}, K_{3,3}, AK_{3,3}$.}
\end{figure}
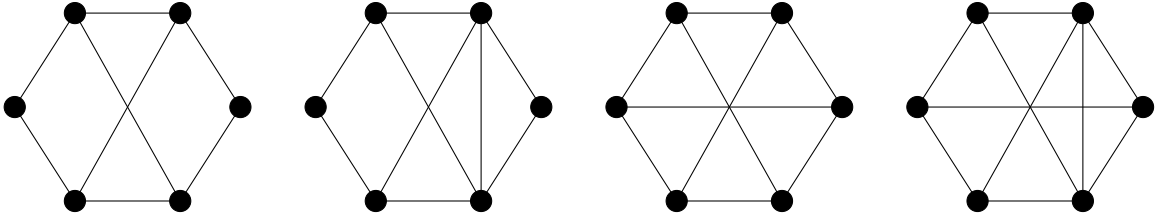

In particular, their chromatic symmetric functions are
$$\begin{array}{lclclclclcl}
X_{\sa _{3,3}}&=&2e_{(2,2,2)}&-&6e_{(3,3)}&+&26e_{(4,2)}&+&28e_{(5,1)}&+&102e_{(6)}\\
X_{\as _{3,3}}&=&2e_{(3,2,1)}&-&6e_{(3,3)}&+&24e_{(4,2)}&+&40e_{(5,1)}&+&120e_{(6)}\\
X_{K _{3,3}}&=&2e_{(2,2,2)}&-&12e_{(3,3)}&+&30e_{(4,2)}&+&24e_{(5,1)}&+&186e_{(6)}\\
X_{AK _{3,3}}&=&2e_{(3,2,1)}&-&6e_{(3,3)}&+&20e_{(4,2)}&+&32e_{(5,1)}&+&228e_{(6)}.
\end{array}$$

As we will see in  {Section~\ref{sec:saltires}}, the leftmost two graphs each naturally give rise to infinite families of graphs that are \ccf, are not $e$-positive and, moreover, we can explicitly identify and calculate a negative coefficient.  {However, we will first establish that $e$-positivity does not depend on the graph being either claw-free or claw-contractible-free.}

\section{Independence of $e$-positivity and the claw}\label{sec:eightfamilies} 
In order to prove that the $e$-positivity of a graph does not, in general, depend on the graph being claw-free nor \ccf\ we need to establish that there exists an infinite family of graphs that satisfies every combination of claw-free, \ccf, and $e$-positive. We achieve this in Theorem~\ref{the:no_connection}, in which we see that some of our families are also those that will resolve Stanley's problem later, and some are well-known families that we have already defined. The remaining families we define now, with an accompanying figure immediately following  {in Example~\ref{ex:other_graphs}.}

The \emph{generalized bull graph} $B_{a,b}$, where $a,b\geq 2$, is the graph on $a+b+1$ vertices created as follows. Consider $K_3$ with vertices $u, v, w$, and two paths $P_a$ and $P_b$. Then $B_{a,b}$ is created by identifying a leaf of $P_a$ with $u$ and identifying a leaf of $P_b$ with $v$. Similarly the \emph{generalized net graph} $GN_a$, where $a\geq 3$, is the graph on $a+3$ vertices created as follows. Consider $K_a$ and three copies of $P_2$. Then $GN_a$ is created by identifying a leaf of each $P_2$ with a distinct vertex of $K_a$. The \emph{tadpole graph} $T_{a,b}$, where $a\geq 3, b\geq 2$, is the graph on $a+b-1$ vertices created as follows. Consider $C_a$ and $P_b$. Then $T_{a,b}$ is created by identifying a leaf of $P_b$ with a vertex, denoted by $v$, of $C_a$.

Given a partition $\lambda  = (\lambda_1, \lambda_2, \ldots , \lambda_{\ell(\lambda)}) \vdash (N-1)$, where  ${\ell(\lambda)} \geq 3$, the \emph{spider graph} $S(\lambda_1, \lambda_2, \ldots , \lambda_{\ell(\lambda)})$ is the graph on $N$ vertices consisting of $P_{\lambda_1}, P_{\lambda_2}, \ldots , P_{\lambda_{\ell(\lambda)}}$ and a single vertex $v$ that is joined to a leaf in each $P_{\lambda_i}$ for $1\leq i \leq {\ell(\lambda)}$. Lastly, the \emph{star graph} $S_N$, where $N\geq 4$ is the spider graph $S(1^{N-1})$. 

\begin{example}\label{ex:other_graphs}  {Here are small examples for each of the types of graphs defined above.} Note that $S_4$ is the claw.\end{example}

\begin{figure}[h]
\begin{center}\begin{tikzpicture}[scale=1]
\begin{scope}[shift={(0,0)}]
\draw (0,-1.5) node {$B _{2,2}$};
\coordinate (1) at (-.5,0);
\coordinate (2) at (.5,0);
\coordinate (3) at (0,.5);
\coordinate (4) at (-1,0);
\coordinate (5) at (1,0);
\filldraw [black] (1) circle (2pt);
\filldraw [black] (2) circle (2pt);
\filldraw [black] (3) circle (2pt);
\filldraw [black] (4) circle (2pt);
\filldraw [black] (5) circle (2pt);
\draw[-] (1)--(2);
\draw[-] (2)--(3);
\draw[-] (1)--(3);
\draw[-] (1)--(4);
\draw[-] (2)--(5);
\end{scope}
\begin{scope}[shift={(4,0)}]
\draw (0,-1.5) node {$GN_4$};
\coordinate (1) at (-.5,0);
\coordinate (2) at (.5,0);
\coordinate (3) at (0,.5);
\coordinate (4) at (-1,0);
\coordinate (5) at (1,0);
\coordinate (6) at (0,1);
\coordinate (7) at (0,-.5);
\filldraw [black] (1) circle (2pt);
\filldraw [black] (2) circle (2pt);
\filldraw [black] (3) circle (2pt);
\filldraw [black] (4) circle (2pt);
\filldraw [black] (5) circle (2pt);
\filldraw [black] (6) circle (2pt);
\filldraw [black] (7) circle (2pt);
\draw[-] (1)--(2);
\draw[-] (2)--(3);
\draw[-] (1)--(3);
\draw[-] (1)--(4);
\draw[-] (2)--(5);
\draw[-] (3)--(6);
\draw[-] (1)--(7);
\draw[-] (2)--(7);
\draw[-] (3)--(7);
\end{scope}
\begin{scope}[shift={(8,0)}]
\draw (0,-1.5) node {$T_{4,2}$};
\coordinate (1) at (0,.5);
\coordinate (2) at (-.5,0);
\coordinate (3) at (.5,0);
\coordinate (4) at (0,-.5);
\coordinate (5) at (0,1);
\filldraw [black] (1) circle (2pt);
\filldraw [black] (2) circle (2pt);
\filldraw [black] (3) circle (2pt);
\filldraw [black] (4) circle (2pt);
\filldraw [black] (5) circle (2pt);
\draw[-] (1)--(2);
\draw[-] (1)--(3);
\draw[-] (4)--(3);
\draw[-] (4)--(2);
\draw[-] (5)--(1);
\end{scope}
\begin{scope}[shift={(11,0)}]
\draw (.75,-1.5) node {$S(2,1,1)$};
\coordinate (1) at (0,0);
\coordinate (2) at (.5,0);
\coordinate (3) at (1,0);
\coordinate (4) at (1.5,0);
\coordinate (5) at (.5,.5);
\filldraw [black] (1) circle (2pt);
\filldraw [black] (2) circle (2pt);
\filldraw [black] (3) circle (2pt);
\filldraw [black] (4) circle (2pt);
\filldraw [black] (5) circle (2pt);
\draw[-] (1)--(2)--(3)--(4);
\draw[-] (2)--(5);
\end{scope}
\end{tikzpicture}
\end{center}
\end{figure}

\begin{theorem}
\label{the:no_connection}
There exists an infinite family of graphs that satisfies every combination of claw-free, \ccf\ and $e$-positive. 
\end{theorem}

\begin{proof} We will provide infinite families of graphs that satisfy all eight combinations of claw-free, \ccf, and $e$-positive. A summary is provided in Table~\ref{tab:eightfamilies}.

\emph{Claw-free, \ccf, \ep:} The complete graphs $K_N$ for $N\geq 1$, the $N$-paths $P_N$ for $N\geq 1$, and the $N$-cycles $C_N$ for $N\geq 1$ are straightforwardly claw-free and \ccf. The graphs $P_N$ and $C_N$ were proved explicitly to be \ep\ by Stanley \cite[Propositions 5.3 and 5.4]{Stan95}, and $K_N$ is \ep\ since $X_{K_N}=N! e_N$.

\emph{Claw-free, \ccf, not \ep:} The triangular tower graphs $\trtn$ for $n\geq 3$ defined in Section~\ref{sec:towers} and illustrated in Figure~\ref{fig:tt} are proved to satisfy these three properties in Theorem~\ref{the:trtnotpositive}.

\emph{Claw-free, not \ccf, \ep:} The generalized bull graphs $B_{a,b}$ for $a,b \geq 2$ are claw-free by Lemma~\ref{lem:beineke} because of the following edge partition: The edges in the $K_3$ form one set, and every remaining edge forms its own set. These graphs contract to the claw by Lemma~\ref{lem:BV} because we can delete three independent vertices  {(the vertex $w$, and the two degree one vertices) resulting in a connected graph.} The generalized bull graphs are \ep\ as they are a special case of a corollary by Gebhard and Sagan \cite[Corollary 7.7]{GebSag}. 

\emph{Claw-free, not \ccf, not \ep:} The generalized net graphs $GN_a$ for $a\geq 3$ are \cf\ by Lemma~\ref{lem:beineke} because of the following edge partition: The edges in the $K_a$ form one set, and each of the three remaining edges forms its own set. These graphs contract to the claw by Lemma~\ref{lem:BV} because we can  delete the three independent degree one vertices resulting in a connected graph, and were proved to be  {not} \ep\ by Foley  {et al.} \cite[Theorem 1]{SpiderKin}.

\emph{Not \cf, \ccf, \ep:} Consider the tadpole graphs $T_{a,b}$ for $a\geq 4$, $b\geq 2$, that is, all tadpole graphs where $a\neq 3$. These graphs are not \cf\ due to the induced claw consisting of $v$ and the three vertices adjacent to it, since $a\neq 3$. They are also \ccf\ by Lemma~\ref{lem:BV}, as if we delete three independent vertices then at least two vertices belong to the copy of $C_a$ or $P_b$, resulting in a disconnected graph. All tadpole graphs are \ep\ by combining the results of Gebhard and Sagan \cite[Proposition 6.8]{GebSag} and \cite[Theorem 7.6]{GebSag} repeatedly.

\emph{Not \cf, \ccf, not \ep:} The saltire graphs $\san$ for $n\geq3$ and augmented saltire graphs $\aseven$ and $\asodd$ for $n\geq 3$ defined in Section~\ref{sec:saltires} and illustrated in Figures~\ref{fig:saltires} and \ref{fig:augsaltires} are proved to be \ccf\ and not \ep\ in Theorems~\ref{the:sanotpositive} and \ref{the:asnotpositive}. These graphs are not \cf\ due to the induced claw consisting of $v_2$ and the three vertices adjacent to it.

\emph{Not \cf, not \ccf,  \ep:} The spider graphs $S(n, n-1, 1)$ for $n\geq 2$ satisfy these properties. Firstly, they are not \cf\ due to the induced claw consisting of $v$ and the three vertices adjacent to it. Secondly, they contract to the claw by Lemma~\ref{lem:BV} since deleting the three vertices of degree one, which are independent, results in a connected graph. Now we will prove these graphs are \ep, and for this we need to recall the following by Orellana and Scott \cite[Theorem 3.1]{Orellana}: If $G$ is a graph with edge set $E$ such that $\epsilon_1, \epsilon_2, \epsilon_3 \in E$ form an edge induced $K_3$, then
\begin{equation}\label{eq:trip-del}
X_G = X_{G- \{\epsilon_1\}}+X_{G- \{\epsilon_2\}}- X_{G- \{\epsilon_1, \epsilon_2\}}
\end{equation}where $G-S$ for some $S\subseteq E$ is the  {graph with vertex set $V$ and edge set $E-S$.}

Consider the generalized bull graph $B_{n,n}$ for $n\geq 2$. Let $\epsilon_1, \epsilon_2, \epsilon_3$ be the three edges in its $K_3$ such that $\epsilon_1, \epsilon_2$ are incident to $w$ and $\epsilon_3$ is the edge $uv$. By Equations~\eqref{eq:XGXH} and \eqref{eq:trip-del} we have that
$$X_{B_{n,n}}=2X_{S(n,n-1,1)}-X_{P_{2n}}X_{K_1}$$
so $X_{S(n,n-1,1)}=\frac{1}{2}(X_{B_{n,n}}+X_{P_{2n}}X_{K_1})$. We know from above that $B_{n,n}$, $P_{2n}$ and $K_1$ are all \ep, and hence so is $S(n,n-1,1)$.

\emph{Not \cf, not \ccf, not \ep:} The star graphs $S_N$ for $N\geq4$ are not \cf\ due to the induced claw consisting of $v$ and any three vertices adjacent to it. These graphs contract to the claw by Lemma~\ref{lem:BV} since deleting any three vertices of degree one, which are independent, results in a connected graph. Lastly, these graphs were shown to be not \ep\ by Dahlberg, She and van Willigenburg \cite[Example 11]{Trees}. \end{proof}

\section{Saltire and augmented saltire graphs}\label{sec:saltires}
In this section we begin by introducing our first infinite family of graphs to resolve Stanley's  {problem}. In particular, this family includes the graph with the fewest number of vertices and edges, as verified by computer, which is \ccf\ and yet whose chromatic symmetric function is not $e$-positive.

The \emph{saltire graph} $\sa _{a,b}$, where $a,b \geq 2$, is the graph on $a+b$  vertices $\{v_1, v_2, \ldots , v_{a+b}\}$ with edges $v_iv_{i+1}$ for $1\leq i \leq a+b-1$, $v_{a+b}v_1$, $v_1v_{a+1}$ and $v_2v_{a+2}$. For example, $\sa _{2,2}= K_4$, and $\sa _{3,3}$ and a graphical representation of a generic $\sa _{a,b}$ are given in Figure~\ref{fig:saltires}.

\pagebreak

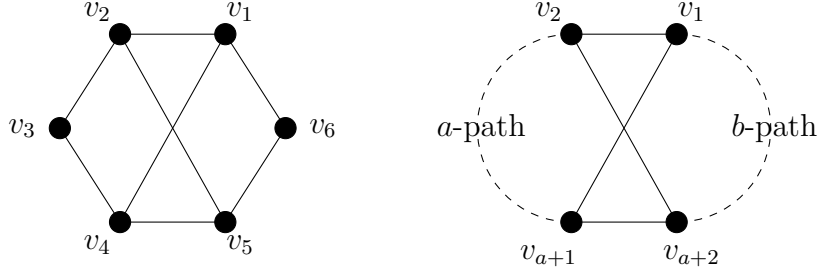
\begin{figure}[h]
\begin{center}
\begin{tikzpicture}[scale=1]
\begin{scope}[shift={(0,0)}]
\coordinate (1) at (0,0);
\coordinate (2) at (.8,1.25);
\coordinate (3) at (2.2,1.25);
\coordinate (4) at (3,0);
\coordinate (5) at (2.2,-1.25);
\coordinate (6) at  (.8,-1.25);
\draw (-.5,0) node {$v_3$};
\draw (.5,1.55) node {$v_2$};
\draw (2.4,1.55) node {$v_1$};
\draw (3.5,0) node {$v_6$};
\draw (2.4,-1.55) node {$v_5$};
\draw (.5,-1.55) node {$v_4$};
\filldraw [black] (1) circle (4pt);
\filldraw [black] (2) circle (4pt);
\filldraw [black] (3) circle (4pt);
\filldraw [black] (4) circle (4pt);
\filldraw [black] (5) circle (4pt);
\filldraw [black] (6) circle (4pt);
\draw[->] (2)--(5);
\draw[->] (3)--(6);
\draw[->] (1)--(2)--(3)--(4)--(5)--(6)--(1);
\end{scope}
\begin{scope}[shift={(6,0)}]
\coordinate (1) at (0,0);
\coordinate (2) at (.8,1.25);
\coordinate (3) at (2.2,1.25);
\coordinate (4) at (3,0);
\coordinate (5) at (2.2,-1.25);
\coordinate (6) at  (.8,-1.25);
\draw[dashed] (2) arc (90:270:1.25);
\draw[dashed] (3) arc (90:-90:1.25);
\draw (-.4,0) node {$a$-path};
\draw (3.5,0) node {$b$-path};
\draw (.5,1.55) node {$v_2$};
\draw (2.4,1.55) node {$v_1$};
\draw (2.4,-1.7) node {$v_{a+2}$};
\draw (.5,-1.7) node {$v_{a+1}$};
\filldraw [black] (2) circle (4pt);
\filldraw [black] (3) circle (4pt);
\filldraw [black] (5) circle (4pt);
\filldraw [black] (6) circle (4pt);
\draw[->] (2)--(5);
\draw[->] (3)--(6);
\draw[->] (2)--(3);
\draw[->] (5)--(6);
\end{scope}
\end{tikzpicture}
\end{center}
\caption{From left to right we have $\sa _{3,3}$ and a generic $\sa_{a,b}$.}
\label{fig:saltires}
\end{figure}

From the graphical representation we see that the edges of $\sa _{a,b}$ can be naturally partitioned into three parts as follows. Given $\sa _{a,b}$ we refer to the subgraph induced by the edges $$\{ v_iv_{i+1} \suchthat 2\leq i \leq a\}$$as the \emph{$a$-path}, the subgraph induced by the edges $$\{ v_iv_{i+1} \suchthat a+2\leq i \leq a+b-1\}\cup\{v_{a+b}v_1\}$$as the \emph{$b$-path}, and the subgraph induced by the edges $$\{ v_1v_2, v_1v_{a+1}, v_2v_{a+2}, v_{a+1}v_{a+2}\}$$as the \emph{middle}. Furthermore, when considering $\san$ we refer to the $a$-path, where $a=n$, as the \emph{left $n$-path}, and the $b$-path, where $b=n$, as the \emph{right $n$-path}, to distinguish them. With these definitions in hand, we come to our first result on saltire graphs.

\begin{lemma}\label{lem:sa_ccf} For all $a,b \geq 2$ the graph $\sa _{a,b}$ is \ccf. In particular, for $n\geq 3$ the graph $\san$ is \ccf.
\end{lemma}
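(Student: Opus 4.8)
The plan is to apply Lemma~\ref{lem:BV}: a graph $G$ is \ccf\ if and only if deleting any set of $3$ independent vertices disconnects $G$. So I would take an arbitrary independent set $\{u,w,z\}$ of three vertices in $\sa_{a,b}$ and show that $\sa_{a,b}$ with these three vertices deleted is disconnected. Since the statement about $\san$ is just the special case $a=b=n\geq 3$ (note $n\geq 3$ guarantees $a+b\geq 6$, so there is room for three independent vertices), it follows immediately once the general claim is proved; I would only need to remark on this at the end.

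The key structural observation is that $\sa_{a,b}$ is built from a cycle $C_{a+b}$ on $v_1,\dots,v_{a+b}$ together with the two extra ``crossing'' edges $v_1v_{a+1}$ and $v_2v_{a+2}$. The four vertices $v_1,v_2,v_{a+1},v_{a+2}$ (the ``middle'') are the only vertices of degree $3$; every other vertex has degree $2$ and lies in the interior of either the $a$-path or the $b$-path. I would organize the argument by how many of the three deleted independent vertices lie among $\{v_1,v_2,v_{a+1},v_{a+2}\}$. The crucial sub-fact to establish first: the middle vertices $v_1,v_2,v_{a+1},v_{a+2}$ form two nonadjacent pairs only across the diagonals, i.e. among these four vertices the only non-edges are $\{v_1,v_{a+2}\}$ and $\{v_2,v_{a+1}\}$ (using $a,b\geq 2$ so that $v_1v_2$, $v_1v_{a+1}$, $v_2v_{a+2}$, $v_{a+1}v_{a+2}$ are all genuine edges). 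Hence an independent set contains at most two middle vertices, and if it contains exactly two they must be $\{v_1,v_{a+2}\}$ or $\{v_2,v_{a+1}\}$.

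I would then do the case analysis. If all three deleted vertices are interior path vertices, then at least two of them lie on the same one of the two paths (pigeonhole), and deleting two interior vertices of, say, the $a$-path already severs that path into pieces, isolating the segment strictly between them from the rest of $G$ (that segment is nonempty and has no other edges), so $G$ minus the three vertices is disconnected. If exactly one deleted vertex is a middle vertex, the other two are interior path vertices; if both lie on the same path we again cut that path as before; if they lie on different paths, I would check that removing the one middle vertex plus one interior vertex from each path disconnects the graph — the middle vertex removal kills one crossing edge and part of the rim, and the two interior removals finish separating the remaining pieces (here one uses that the two crossing edges both have endpoints among the middle, so once one middle vertex is gone the surviving crossing edge alone cannot reconnect the severed path segments). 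If exactly two deleted vertices are middle vertices, they are a diagonal pair, say $\{v_1,v_{a+2}\}$; removing these two kills both crossing edges (each crossing edge has $v_1$ or $v_{a+2}$ as an endpoint) and breaks the rim cycle into two arcs, and then the third deleted vertex, being independent from both and hence an interior vertex of a path, lands inside one of those arcs and disconnects things — or, if the two arcs are already the components, we are done even more directly.

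The main obstacle I anticipate is not any single case but bookkeeping: making sure the arc/segment that gets isolated is genuinely nonempty (this is where the hypotheses $a,b\geq 2$, and for the specialization $n\geq 3$, are used) and that no surviving crossing edge secretly reconnects the pieces. I would handle this uniformly by noting that \emph{both} crossing edges $v_1v_{a+1}$ and $v_2v_{a+2}$ have all four of their endpoints inside the four-element middle set, so as soon as the deleted set meets the middle in a way that kills those edges (which the case analysis arranges), the only remaining connectivity is along the rim cycle $C_{a+b}$, and a cycle minus any two or three vertices is disconnected provided it had at least, say, $4$ vertices — which it does. Assembling these observations gives the lemma, and the $\san$ claim is the case $a=b=n\geq 3$.
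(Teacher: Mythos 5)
Your argument is correct and rests on the same two pillars as the paper's: the characterization in Lemma~\ref{lem:BV}, plus a pigeonhole step that places two of the three independent vertices on a common path, where deleting two non-adjacent vertices isolates the nonempty segment strictly between them. Where you diverge is in the decomposition of the vertex set: you split $\sa_{a,b}$ into the four degree-$3$ middle vertices and the degree-$2$ path interiors, and then case on how many deleted vertices are middle vertices, which forces you to analyse rim-arcs-plus-one-surviving-chord configurations separately. The paper instead defines the $a$-path and $b$-path so that they \emph{include} the middle vertices as endpoints (the $a$-path on $v_2,\ldots,v_{a+1}$ and the $b$-path on $v_{a+2},\ldots,v_{a+b},v_1$), so the two paths partition all of $V$; pigeonhole then immediately puts two independent vertices on the same path, and since every vertex strictly between two non-adjacent vertices of either path misses both crossing edges, that segment becomes an isolated nonempty component --- two sentences, no cases. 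Your extra cases are all sound (the key points being that an independent pair of middle vertices must be a diagonal pair, which kills both chords, and that a single surviving chord merges at most two of the three rim arcs, leaving at least two components), but they buy nothing the paper's sharper partition does not already give; the one phrase to tighten is ``a cycle minus any two or three vertices is disconnected,'' which is only true because your deleted vertices are pairwise non-adjacent.
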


\begin{proof} By Lemma~\ref{lem:BV} it suffices to show that the deletion of any three independent vertices from $\sa _{a,b}$ results in a disconnected graph. The pigeonhole principle guarantees that at least two of these independent vertices will belong to either the $a$-path or the $b$-path, and we can see from Figure~\ref{fig:saltires} that the removal of any two non-adjacent vertices from the $a$-path or the $b$-path results in a disconnected graph.
\end{proof}

Now that we have proved that $\sa _{a,b}$ is \ccf, we will restrict our attention to $\san$ where $n\geq 3$, and prove that its chromatic symmetric function is  not $e$-positive by calculating the coefficient $[e_{(n^2)}]X_{\san}$. Note that since $(2n)$ and $(n^2)$ are the only partitions $\lambda \vdash 2n$ satisfying $\lambda \coarsep (n^2)$ by Observation~\ref{obs:newton} and Lemma~\ref{lem:stanley}, in order to calculate $[e_{(n^2)}]X_{\san}$ we need to calculate $[p_{(2n)}]X_{\san}$ and $[p_{(n^2)}]X_{\san}$.


\begin{table}
\begin{center}
\begin{tabular}{| c |c|}
\hline
$i$& $\sa_{n,n}$  with $i$ middle edges removed\\
\hline
1&
\begin{tikzpicture}[scale=.5]
\coordinate (1) at (0,0);
\coordinate (2) at (.8,1.25);
\coordinate (3) at (2.2,1.25);
\coordinate (4) at (3,0);
\coordinate (5) at (2.2,-1.25);
\coordinate (6) at  (.8,-1.25);
\draw (2) arc (90:270:1.25);
\draw (3) arc (90:-90:1.25);
\draw (.5,1.55) node {$v_2$};
\draw (2.4,1.55) node {$v_1$};
\draw (2.4,-1.7) node {$v_{n+2}$};
\draw (.5,-1.7) node {$v_{n+1}$};
\filldraw [black] (2) circle (4pt);
\filldraw [black] (3) circle (4pt);
\filldraw [black] (5) circle (4pt);
\filldraw [black] (6) circle (4pt);
\draw[->] (3)--(6);
\draw[->] (2)--(3);
\draw[->] (5)--(6);
\end{tikzpicture}
\begin{tikzpicture}[scale=.5]
\coordinate (1) at (0,0);
\coordinate (2) at (.8,1.25);
\coordinate (3) at (2.2,1.25);
\coordinate (4) at (3,0);
\coordinate (5) at (2.2,-1.25);
\coordinate (6) at  (.8,-1.25);
\draw (2) arc (90:270:1.25);
\draw (3) arc (90:-90:1.25);
\draw (.5,1.55) node {$v_2$};
\draw (2.4,1.55) node {$v_1$};
\draw (2.4,-1.7) node {$v_{n+2}$};
\draw (.5,-1.7) node {$v_{n+1}$};
\filldraw [black] (2) circle (4pt);
\filldraw [black] (3) circle (4pt);
\filldraw [black] (5) circle (4pt);
\filldraw [black] (6) circle (4pt);
\draw[->] (2)--(5);
\draw[->] (2)--(3);
\draw[->] (5)--(6);
\end{tikzpicture}
\begin{tikzpicture}[scale=.5]
\coordinate (1) at (0,0);
\coordinate (2) at (.8,1.25);
\coordinate (3) at (2.2,1.25);
\coordinate (4) at (3,0);
\coordinate (5) at (2.2,-1.25);
\coordinate (6) at  (.8,-1.25);
\draw (2) arc (90:270:1.25);
\draw (3) arc (90:-90:1.25);
\draw (.5,1.55) node {$v_2$};
\draw (2.4,1.55) node {$v_1$};
\draw (2.4,-1.7) node {$v_{n+2}$};
\draw (.5,-1.7) node {$v_{n+1}$};
\filldraw [black] (2) circle (4pt);
\filldraw [black] (3) circle (4pt);
\filldraw [black] (5) circle (4pt);
\filldraw [black] (6) circle (4pt);
\draw[->] (2)--(5);
\draw[->] (3)--(6);
\draw[->] (5)--(6);
\end{tikzpicture}
\begin{tikzpicture}[scale=.5]
\coordinate (1) at (0,0);
\coordinate (2) at (.8,1.25);
\coordinate (3) at (2.2,1.25);
\coordinate (4) at (3,0);
\coordinate (5) at (2.2,-1.25);
\coordinate (6) at  (.8,-1.25);
\draw (2) arc (90:270:1.25);
\draw (3) arc (90:-90:1.25);
\draw (.5,1.55) node {$v_2$};
\draw (2.4,1.55) node {$v_1$};
\draw (2.4,-1.7) node {$v_{n+2}$};
\draw (.5,-1.7) node {$v_{n+1}$};
\filldraw [black] (2) circle (4pt);
\filldraw [black] (3) circle (4pt);
\filldraw [black] (5) circle (4pt);
\filldraw [black] (6) circle (4pt);
\draw[->] (2)--(5);
\draw[->] (3)--(6);
\draw[->] (2)--(3);
\end{tikzpicture}\\
\hline
2& 
\begin{tikzpicture}[scale=.5]
\coordinate (1) at (0,0);
\coordinate (2) at (.8,1.25);
\coordinate (3) at (2.2,1.25);
\coordinate (4) at (3,0);
\coordinate (5) at (2.2,-1.25);
\coordinate (6) at  (.8,-1.25);
\draw (2) arc (90:270:1.25);
\draw (3) arc (90:-90:1.25);
\draw (.5,1.55) node {$v_2$};
\draw (2.4,1.55) node {$v_1$};
\draw (2.4,-1.7) node {$v_{n+2}$};
\draw (.5,-1.7) node {$v_{n+1}$};
\filldraw [black] (2) circle (4pt);
\filldraw [black] (3) circle (4pt);
\filldraw [black] (5) circle (4pt);
\filldraw [black] (6) circle (4pt);
\draw[->] (2)--(3);
\draw[->] (5)--(6);
\end{tikzpicture}
\begin{tikzpicture}[scale=.5]
\coordinate (1) at (0,0);
\coordinate (2) at (.8,1.25);
\coordinate (3) at (2.2,1.25);
\coordinate (4) at (3,0);
\coordinate (5) at (2.2,-1.25);
\coordinate (6) at  (.8,-1.25);
\draw (2) arc (90:270:1.25);
\draw (3) arc (90:-90:1.25);
\draw (.5,1.55) node {$v_2$};
\draw (2.4,1.55) node {$v_1$};
\draw (2.4,-1.7) node {$v_{n+2}$};
\draw (.5,-1.7) node {$v_{n+1}$};
\filldraw [black] (2) circle (4pt);
\filldraw [black] (3) circle (4pt);
\filldraw [black] (5) circle (4pt);
\filldraw [black] (6) circle (4pt);
\draw[->] (3)--(6);
\draw[->] (5)--(6);
\end{tikzpicture}
\begin{tikzpicture}[scale=.5]
\coordinate (1) at (0,0);
\coordinate (2) at (.8,1.25);
\coordinate (3) at (2.2,1.25);
\coordinate (4) at (3,0);
\coordinate (5) at (2.2,-1.25);
\coordinate (6) at  (.8,-1.25);
\draw (2) arc (90:270:1.25);
\draw (3) arc (90:-90:1.25);
\draw (.5,1.55) node {$v_2$};
\draw (2.4,1.55) node {$v_1$};
\draw (2.4,-1.7) node {$v_{n+2}$};
\draw (.5,-1.7) node {$v_{n+1}$};
\filldraw [black] (2) circle (4pt);
\filldraw [black] (3) circle (4pt);
\filldraw [black] (5) circle (4pt);
\filldraw [black] (6) circle (4pt);
\draw[->] (3)--(6);
\draw[->] (2)--(3);
\end{tikzpicture}
\begin{tikzpicture}[scale=.5]
\coordinate (1) at (0,0);
\coordinate (2) at (.8,1.25);
\coordinate (3) at (2.2,1.25);
\coordinate (4) at (3,0);
\coordinate (5) at (2.2,-1.25);
\coordinate (6) at  (.8,-1.25);
\draw (2) arc (90:270:1.25);
\draw (3) arc (90:-90:1.25);
\draw (.5,1.55) node {$v_2$};
\draw (2.4,1.55) node {$v_1$};
\draw (2.4,-1.7) node {$v_{n+2}$};
\draw (.5,-1.7) node {$v_{n+1}$};
\filldraw [black] (2) circle (4pt);
\filldraw [black] (3) circle (4pt);
\filldraw [black] (5) circle (4pt);
\filldraw [black] (6) circle (4pt);
\draw[->] (2)--(5);
\draw[->] (5)--(6);
\end{tikzpicture}
\begin{tikzpicture}[scale=.5]
\coordinate (1) at (0,0);
\coordinate (2) at (.8,1.25);
\coordinate (3) at (2.2,1.25);
\coordinate (4) at (3,0);
\coordinate (5) at (2.2,-1.25);
\coordinate (6) at  (.8,-1.25);
\draw (2) arc (90:270:1.25);
\draw (3) arc (90:-90:1.25);
\draw (.5,1.55) node {$v_2$};
\draw (2.4,1.55) node {$v_1$};
\draw (2.4,-1.7) node {$v_{n+2}$};
\draw (.5,-1.7) node {$v_{n+1}$};
\filldraw [black] (2) circle (4pt);
\filldraw [black] (3) circle (4pt);
\filldraw [black] (5) circle (4pt);
\filldraw [black] (6) circle (4pt);
\draw[->] (2)--(5);
\draw[->] (2)--(3);
\end{tikzpicture}
\begin{tikzpicture}[scale=.5]
\coordinate (1) at (0,0);
\coordinate (2) at (.8,1.25);
\coordinate (3) at (2.2,1.25);
\coordinate (4) at (3,0);
\coordinate (5) at (2.2,-1.25);
\coordinate (6) at  (.8,-1.25);
\draw (2) arc (90:270:1.25);
\draw (3) arc (90:-90:1.25);
\draw (.5,1.55) node {$v_2$};
\draw (2.4,1.55) node {$v_1$};
\draw (2.4,-1.7) node {$v_{n+2}$};
\draw (.5,-1.7) node {$v_{n+1}$};
\filldraw [black] (2) circle (4pt);
\filldraw [black] (3) circle (4pt);
\filldraw [black] (5) circle (4pt);
\filldraw [black] (6) circle (4pt);
\draw[->] (2)--(5);
\draw[->] (3)--(6);
\end{tikzpicture}\\
\hline
3&
\begin{tikzpicture}[scale=.5]
\coordinate (1) at (0,0);
\coordinate (2) at (.8,1.25);
\coordinate (3) at (2.2,1.25);
\coordinate (4) at (3,0);
\coordinate (5) at (2.2,-1.25);
\coordinate (6) at  (.8,-1.25);
\draw (2) arc (90:270:1.25);
\draw (3) arc (90:-90:1.25);
\draw (.5,1.55) node {$v_2$};
\draw (2.4,1.55) node {$v_1$};
\draw (2.4,-1.7) node {$v_{n+2}$};
\draw (.5,-1.7) node {$v_{n+1}$};
\filldraw [black] (2) circle (4pt);
\filldraw [black] (3) circle (4pt);
\filldraw [black] (5) circle (4pt);
\filldraw [black] (6) circle (4pt);
\draw[->] (5)--(6);
\end{tikzpicture}
\begin{tikzpicture}[scale=.5]
\coordinate (1) at (0,0);
\coordinate (2) at (.8,1.25);
\coordinate (3) at (2.2,1.25);
\coordinate (4) at (3,0);
\coordinate (5) at (2.2,-1.25);
\coordinate (6) at  (.8,-1.25);
\draw (2) arc (90:270:1.25);
\draw (3) arc (90:-90:1.25);
\draw (.5,1.55) node {$v_2$};
\draw (2.4,1.55) node {$v_1$};
\draw (2.4,-1.7) node {$v_{n+2}$};
\draw (.5,-1.7) node {$v_{n+1}$};
\filldraw [black] (2) circle (4pt);
\filldraw [black] (3) circle (4pt);
\filldraw [black] (5) circle (4pt);
\filldraw [black] (6) circle (4pt);
\draw[->] (2)--(3);
\end{tikzpicture}
\begin{tikzpicture}[scale=.5]
\coordinate (1) at (0,0);
\coordinate (2) at (.8,1.25);
\coordinate (3) at (2.2,1.25);
\coordinate (4) at (3,0);
\coordinate (5) at (2.2,-1.25);
\coordinate (6) at  (.8,-1.25);
\draw (2) arc (90:270:1.25);
\draw (3) arc (90:-90:1.25);
\draw (.5,1.55) node {$v_2$};
\draw (2.4,1.55) node {$v_1$};
\draw (2.4,-1.7) node {$v_{n+2}$};
\draw (.5,-1.7) node {$v_{n+1}$};
\filldraw [black] (2) circle (4pt);
\filldraw [black] (3) circle (4pt);
\filldraw [black] (5) circle (4pt);
\filldraw [black] (6) circle (4pt);
\draw[->] (3)--(6);
\end{tikzpicture}
\begin{tikzpicture}[scale=.5]
\coordinate (1) at (0,0);
\coordinate (2) at (.8,1.25);
\coordinate (3) at (2.2,1.25);
\coordinate (4) at (3,0);
\coordinate (5) at (2.2,-1.25);
\coordinate (6) at  (.8,-1.25);
\draw (2) arc (90:270:1.25);
\draw (3) arc (90:-90:1.25);
\draw (.5,1.55) node {$v_2$};
\draw (2.4,1.55) node {$v_1$};
\draw (2.4,-1.7) node {$v_{n+2}$};
\draw (.5,-1.7) node {$v_{n+1}$};
\filldraw [black] (2) circle (4pt);
\filldraw [black] (3) circle (4pt);
\filldraw [black] (5) circle (4pt);
\filldraw [black] (6) circle (4pt);
\draw[->] (2)--(5);
\end{tikzpicture}\\
\hline
\end{tabular}
\end{center}
\caption{All possible $\sa _{n,n}$ with $1$ through $3$ middle edges removed.}
\label{table:G_2noptions}
\end{table}

\begin{lemma}For  $n\geq 3$ we have that
\begin{enumerate}
\item $[p_{(2n)}]X_{\san}=-3n^2+4n-2$ and
\item $[p_{(n^2)}]X_{\san}=2n-1$.
\end{enumerate}
\label{lem:powercoeffgraph}
\end{lemma}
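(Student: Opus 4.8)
The plan is to apply Lemma~\ref{lem:stanley}, which gives $[p_{(2n)}]X_{\san}=\sum_{S}(-1)^{|S|}$ summed over those edge sets $S\subseteq E$ whose spanning subgraph $(V,S)$ is connected, and $[p_{(n^2)}]X_{\san}=\sum_{S}(-1)^{|S|}$ summed over those $S$ whose spanning subgraph has exactly two pieces, both of size $n$. I would partition $E$ into the left $n$-path edges $L$, the middle edges $M=\{v_1v_2,\,v_1v_{n+1},\,v_2v_{n+2},\,v_{n+1}v_{n+2}\}$, and the right $n$-path edges $R$, so that every $S$ decomposes as $S=S_L\sqcup S_M\sqcup S_R$. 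First I would prove a reduction: the internal vertices $v_3,\dots,v_n$ of the left $n$-path are incident in $\san$ only to edges of $L$, so if $|S_L|\le n-3$ then $(V,S)$ has a piece contained in $\{v_3,\dots,v_n\}$ of size between $1$ and $n-2$; since $n\ge 3$ this is impossible for both target shapes of $\lambda(S)$. Hence $|S_L|\in\{n-2,n-1\}$ and, symmetrically, $|S_R|\in\{n-2,n-1\}$, so each $n$-path is either intact or missing exactly one of its $n-1$ edges.

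Next I would contract each intact $n$-path to a single vertex (of size $n$), leaving a small multigraph on the remaining super-vertices joined by some of the four middle edges, and handle three cases. (i) Both $n$-paths intact: two super-vertices $x,y$ of size $n$ joined by the four parallel middle edges. (ii) Exactly one intact (two symmetric subcases): the intact path becomes a super-vertex $y$ of size $n$, and the split path becomes two super-vertices $A_1,A_2$ with $|A_1|+|A_2|=n$, with two parallel middle edges joining $y$ to $A_1$ and two joining $y$ to $A_2$. (iii) Neither intact: the halves of the left path give super-vertices $A_1,A_2$, the halves of the right path give $B_1,B_2$, and the four middle edges form a $K_{2,2}$ between $\{A_1,A_2\}$ and $\{B_1,B_2\}$. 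In each case the contribution to the coefficient is the number of choices of the deleted path edge(s) (a factor $n-1$ per split path) times $(-1)^{|S_L|+|S_R|}$ times $\sum_{S_M}(-1)^{|S_M|}$, the last sum taken over the subsets $S_M\subseteq M$ that make $(V,S)$ have the required piece structure on the super-vertices.

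For $[p_{(2n)}]X_{\san}$ the relevant $S_M$ are those connecting all super-vertices. In case (i) this means $S_M\neq\emptyset$, contributing $\sum_{\emptyset\neq S_M\subseteq M}(-1)^{|S_M|}=-1$. In case (ii) it means $S_M$ contains one of the two middle edges from $y$ to $A_1$ and one of the two from $y$ to $A_2$, contributing $(-1)\cdot(-1)=1$; with $n-1$ choices of deleted edge and sign $(-1)^{(n-2)+(n-1)}=-1$, each subcase gives $-(n-1)$. In case (iii) it means $S_M$ is a connected spanning subgraph of $K_{2,2}$, i.e.\ one of the four $3$-edge spanning paths or the whole $4$-cycle, contributing $4(-1)^3+(-1)^4=-3$; with $(n-1)^2$ choices and sign $(-1)^{(n-2)+(n-2)}=1$, this gives $-3(n-1)^2$. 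Summing, $[p_{(2n)}]X_{\san}=-1-2(n-1)-3(n-1)^2=-3n^2+4n-2$.

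For $[p_{(n^2)}]X_{\san}$: in case (i), $S_M$ must be empty (otherwise the two size-$n$ sides merge into one piece of size $2n$), giving $+1$. In case (ii) the two halves $A_1,A_2$ are joined only through the size-$n$ super-vertex $y$, so no spanning subgraph has exactly two pieces of size $n$, and the contribution is $0$. In case (iii), $S_M$ must be a perfect matching of $K_{2,2}$, that is $\{v_1v_2,\,v_{n+1}v_{n+2}\}$ or $\{v_1v_{n+1},\,v_2v_{n+2}\}$, and the two resulting pieces each have size $n$ exactly when the deletions split the left and right $n$-paths into halves of matching sizes; for each of the $n-1$ splittings of the left path there is, for each matching, exactly one matching splitting of the right path, giving $2(n-1)$ such edge sets $S$, each of sign $(-1)^{(n-2)+2+(n-2)}=1$. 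Summing, $[p_{(n^2)}]X_{\san}=1+2(n-1)=2n-1$. I expect the main obstacle to be the bookkeeping in case (iii): reading off the $K_{2,2}$ adjacency on the four half-path super-vertices, determining exactly which $S_M$ yield a connected (respectively two-piece, size-$(n,n)$) spanning subgraph, and, for $p_{(n^2)}$, matching the half-path sizes, all while keeping the signs and the $n-1$ multiplicities correct; I would also double-check that the reduction $|S_L|,|S_R|\in\{n-2,n-1\}$ is valid already at $n=3$, where each $n$-path has only two edges.
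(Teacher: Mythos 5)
Your proposal is correct and follows essentially the same route as the paper: both apply Lemma~\ref{lem:stanley}, observe that at most one edge may be omitted from each $n$-path, and then carry out a case analysis over the four middle edges. The only difference is organizational --- you contract the intact paths and evaluate the signed sums over $S_M$ in closed form (e.g.\ $(1-1)^4-1=-1$ in case (i), and the connected spanning subgraphs, respectively perfect matchings, of the $C_4$ in case (iii)), whereas the paper enumerates by $|E|-|S|$ with a table of pictures; all of your counts and signs agree with the paper's.
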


\begin{proof}
To prove this we will use Lemma~\ref{lem:stanley} that considers all subsets $S$ of the edge set $E$. 
We are only interested in the subsets $S$ that yield $\lambda(S) = (2n)$ or $(n^2)$ both of which have parts  that are at least $n$. This means we will ignore any $S$ where $\lambda(S)$ has a part smaller than $n$ as these subsets will  not affect the coefficient of $p_{(2n)}$ or $p_{(n^2)}$ in $X_{\san}$. Note that if  $S$ has two or more edges removed from the left   $n$-path (or the right $n$-path), then $\lambda(S)$ certainly has a part  smaller than $n$. Thus, we will only consider subsets $S$ that have \emph{at most one edge} removed from the left $n$-path and \emph{at most one edge} removed from the right $n$-path. 
In Table~\ref{table:G_2noptions} we illustrate all possible graphs  with $1$ through $3$  middle edges removed from $\san$ since these will be central to our case analysis, consisting of 5 cases corresponding to the exclusion of 0 to 4 edges from $E$. 

First, consider $|S|=|E|$, so $S$ contains all the edges in $E$. This gives us the term $$(-1)^{2n+2}p_{(2n)}= p_{(2n)}.$$ 

Second, consider $|S|=|E|-1$, so $S$ has one fewer edge than $E$. Note that if we remove any one of the $2n+2$ edges from $\san$, then our graph is still connected. This gives us the term $$(-1)^{2n+1}(2n+2)p_{(2n)}= -(2n+2)p_{(2n)}.$$ 

Third, consider $|S|=|E|-2$, so $S$ has two fewer edges than $E$. If we exclude two edges from the middle, then we can see from Table~\ref{table:G_2noptions}  that all six possibilities result in connected graphs so we get the term $(-1)^{2n}6p_{(2n)}$. 

Instead we can exclude one edge  from the left $n$-path or right $n$-path and the other edge from the middle.  In  all four ways to  remove one edge  from  the middle, as illustrated in Table~\ref{table:G_2noptions}, we can also remove any one of the  $2(n-1)$ edges on the left $n$-path or right $n$-path and still have a connected graph. This gives us the term $(-1)^{2n}8(n-1)p_{(2n)}$. 

Finally, we could exclude no  edges from the middle, one of the $n-1$ edges from the left $n$-path, and one of the $n-1$ edges from the right $n$-path. Any of the $(n-1)^2$ choices  results in a  connected graph. This gives us the term $(-1)^{2n}(n-1)^2p_{(2n)}$.

Altogether from this case we have the term $$(n^2+6n-1)p_{(2n)}.$$

Fourth, consider $|S|=|E|-3$, where we exclude three edges from $E$. We can see from Table~\ref{table:G_2noptions} that excluding any three edges from  the middle leaves the graph  connected. This gives the term $(-1)^{2n-1}4p_{(2n)}$.  

If instead we remove two edges from the middle and one of the $n-1$ edges from the left $n$-path we can see from Table~\ref{table:G_2noptions} that only four out of six possibilities do not yield $\lambda(S)$ to have  a part smaller than $n$.  In these four possibilities  the graph is connected, which gives us the term $(-1)^{2n-1}4(n-1)p_{(2n)}$. We similarly get the term $(-1)^{2n-1}4(n-1)p_{(2n)}$ if we remove two  edges from the middle, one from the right $n$-path, and have all parts being at least $n$. 

Lastly, if we remove one edge from the middle, any one of the $n-1$ edges from the right $n$-path, and any one of the $n-1$ edges from the left $n$-path, then we can see from Table~\ref{table:G_2noptions} that the graph is still connected. This gives us the term $(-1)^{2n-1}4(n-1)^2p_{(2n)}$. 

Altogether from this case we get the term $$-4n^2p_{(2n)}.$$

Fifth and finally, consider $|S|=|E|-4$. If we exclude all four of the edges from the middle, then this disconnects our graph into two pieces of size $n$. The associated term is $(-1)^{2n-2}p_{(n^2)}$. 

Say we exclude three edges from the middle and one from the left $n$-path or right $n$-path. In any of these situations $\lambda(S)$ has a part smaller than $n$. 

Instead say that we remove two edges from the middle, one from the left $n$-path, and one from the right $n$-path. From  Table~\ref{table:G_2noptions} we can see that in only the  leftmost and rightmost pictures that  $\lambda(S)$ is not forced to have a part smaller than $n$. In the   leftmost and rightmost  pictures we will disconnect the graph into two pieces. For  any of the $n-1$ edges on the left $n$-path there is exactly one choice of an edge on the right $n$-path so that we break the graph into two pieces of size $n$. This gives the  term $(-1)^{2n-2}2(n-1)p_{(n^2)}$. 

Altogether from this case we get the term $$(2n-1)p_{(n^2)}.$$

Once we exclude more than four edges from $E$ we are guaranteed that $\lambda(S)$ will have  a part smaller  than $n$. Combining everything the coefficient of $p_{(2n)}$ is therefore
$$[p_{(2n)}]X_{\san}=1-(2n+2)+(n^2+6n-1)-4n^2=-3n^2+4n-2$$
and 
$$[p_{(n^2)}]X_{\san}=2n-1$$
is the  coefficient for $p_{(n^2)}$. 
\end{proof}

\begin{lemma} For $n\geq 1$ we have that 
\begin{enumerate}
\item $[e_{(n^2)}]p_{(2n)}=n$ and
\item $[e_{(n^2)}]p_{(n^2)}=n^2$. 
\end{enumerate}
\label{lem:elemcoeffpower}
\end{lemma}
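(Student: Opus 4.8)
The plan is to derive both identities directly from the Newton identity \eqref{eq:newton}, since each coefficient is visibly the contribution of a single indexing partition.

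For part (1), I would specialize \eqref{eq:newton} to $i = 2n$. Because the $e_\mu$ with $\mu \vdash 2n$ form a basis, the coefficient $[e_{(n^2)}]p_{(2n)}$ is simply the coefficient attached to the term $\mu = (n^2)$ on the right-hand side of \eqref{eq:newton}. For that partition we have $\ell(\mu) = 2$ and $m_n = 2$ with all other $m_j = 0$, so the coefficient is
$$(-1)^{2n-2}\,\frac{2n\,(2-1)!}{2!} = \frac{2n}{2} = n,$$
as claimed.

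For part (2), I would first use $p_{(n^2)} = p_n p_n$ by the definition of $p_\lambda$, and then expand $p_n$ via \eqref{eq:newton} with $i = n$. The only partition of $n$ consisting of a single part is $(n)$ itself, whose coefficient in that expansion is $(-1)^{n-1}\frac{n\,0!}{1!} = (-1)^{n-1}n$, so we may write $p_n = (-1)^{n-1} n\, e_{(n)} + \sum_{\mu \vdash n,\ \mu \neq (n)} c_\mu e_\mu$ for suitable scalars $c_\mu$. Multiplying this expansion by itself, each cross term $e_\mu e_\nu$ (with $\mu, \nu \vdash n$) equals $e_\rho$, where $\rho \vdash 2n$ is the partition whose parts are the parts of $\mu$ together with the parts of $\nu$; since the $e_\lambda$ are linearly independent, such a term contributes to $e_{(n^2)}$ exactly when $\rho = (n^2)$. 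As every part of a partition of $n$ is at most $n$, this forces $\mu = \nu = (n)$, so the only contribution comes from squaring the leading term, giving $[e_{(n^2)}]p_{(n^2)} = \big((-1)^{n-1}n\big)^2 = n^2$.

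I do not anticipate a genuine obstacle: both statements are a direct unwinding of \eqref{eq:newton}. The only point meriting care is the reduction in part (2) to the single pair $(n),(n)$ among all pairs of partitions of $n$; this rests on the facts, recalled in Section~\ref{sec:background}, that the elementary symmetric functions form a basis of $\Lambda$ and that $e_\mu e_\nu$ is indexed by the merge of the underlying partitions.
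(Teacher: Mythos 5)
Your proposal is correct and follows essentially the same route as the paper: part (1) reads off the coefficient of $e_{(n^2)}$ directly from Equation~\eqref{eq:newton} with $i=2n$, and part (2) writes $p_{(n^2)}=p_np_n$ and observes that only the $e_{(n)}\cdot e_{(n)}$ term can produce $e_{(n^2)}$. Your added justification that a partition of $n$ has all parts at most $n$, forcing $\mu=\nu=(n)$, makes explicit a step the paper leaves implicit, but the argument is the same.
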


\begin{proof}
Using Equation~\eqref{eq:newton} we have that $[e_{(n^2)}]p_{(2n)}=(-1)^{2n-2}\frac{2n(2-1)!}{2!}=n$. 

To prove the other coefficient note by Equation~\eqref{eq:newton} that $[e_{n}]p_{n}=(-1)^{n-1}\frac{n(1-1)!}{1!}=(-1)^{n-1}n$. In $p_{(n^2)}=p_np_n$ the coefficient of $e_{(n^2)}$ is purely determined by the multiplication of the coefficients of $e_{n}$ in  $p_n$, which gives $[e_{(n^2)}]p_{(n^2)}=[e_{n}]p_{n}[e_{n}]p_{n}=(-1)^{n-1}n(-1)^{n-1}n=n^2$. 
\end{proof}

We now apply these lemmas to determine the $e$-positivity of $X_{\san}$ for $n\geq 3$ in one final lemma.

\begin{lemma}\label{lem:sa_notepos}
The chromatic symmetric function of $\san$ for $n\geq 3$ is not $e$-positive. In particular, we have that
$$[e_{(n^2)}]X_{\san}=-n(n-1)(n-2).$$
\end{lemma}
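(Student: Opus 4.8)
The strategy is to assemble the three preceding ingredients. By Lemma~\ref{lem:stanley}, $X_{\san}$ is a $\bZ$-linear combination of power sum symmetric functions $p_{\lambda(S)}$ with $\lambda(S) \vdash 2n$, and by Observation~\ref{obs:newton} the only $p_\lambda$ that can contribute to $[e_{(n^2)}]X_{\san}$ are those with $\lambda \coarsep (n^2)$. As already noted in the text, the only partitions $\lambda \vdash 2n$ with $\lambda \coarsep (n^2)$ are $\lambda = (2n)$ and $\lambda = (n^2)$. Hence
$$[e_{(n^2)}]X_{\san} = \bigl([p_{(2n)}]X_{\san}\bigr)\bigl([e_{(n^2)}]p_{(2n)}\bigr) + \bigl([p_{(n^2)}]X_{\san}\bigr)\bigl([e_{(n^2)}]p_{(n^2)}\bigr).$$

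First I would substitute the values from Lemma~\ref{lem:powercoeffgraph}, namely $[p_{(2n)}]X_{\san} = -3n^2+4n-2$ and $[p_{(n^2)}]X_{\san} = 2n-1$, together with the values from Lemma~\ref{lem:elemcoeffpower}, namely $[e_{(n^2)}]p_{(2n)} = n$ and $[e_{(n^2)}]p_{(n^2)} = n^2$. This gives
$$[e_{(n^2)}]X_{\san} = (-3n^2+4n-2)\,n + (2n-1)\,n^2 = -3n^3 + 4n^2 - 2n + 2n^3 - n^2 = -n^3 + 3n^2 - 2n,$$
and factoring yields $-n^3+3n^2-2n = -n(n^2-3n+2) = -n(n-1)(n-2)$, as claimed.

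Finally I would observe that for $n \geq 3$ we have $n(n-1)(n-2) > 0$, so $[e_{(n^2)}]X_{\san} < 0$. Since a symmetric function is $e$-positive only if \emph{every} coefficient in its expansion in the elementary basis is non-negative, the existence of this single negative coefficient shows $X_{\san}$ is not $e$-positive for $n \geq 3$.

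\textbf{Main obstacle.} There is essentially no obstacle left at this stage: all the genuine combinatorial work (the case analysis over edge subsets in Lemma~\ref{lem:powercoeffgraph} and the Newton-identity computations in Lemma~\ref{lem:elemcoeffpower}) has already been carried out, so the proof of this lemma is just the bookkeeping of plugging in and the arithmetic of expanding and factoring a cubic; the only point requiring a moment's care is confirming that $(2n)$ and $(n^2)$ are indeed the \emph{only} relevant partitions, which follows directly from the definition of $\coarsep$ applied to $(n^2)$.
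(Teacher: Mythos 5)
Your proof is correct and follows exactly the same route as the paper: identify $(2n)$ and $(n^2)$ as the only partitions coarsening $(n^2)$ via Observation~\ref{obs:newton}, then combine Lemmas~\ref{lem:powercoeffgraph} and~\ref{lem:elemcoeffpower} and factor the resulting cubic. The only (harmless) addition is that you spell out explicitly why one negative coefficient rules out $e$-positivity, which the paper leaves implicit.
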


\begin{proof}
By Observation~\ref{obs:newton} we can see that  $e_{(n^2)}$ has non-zero coefficient in $p_{\lambda}$ only for  $\lambda\vdash 2n$ with $\lambda\coarsep (n^2)$. There are only two partitions $\lambda\vdash 2n$ where $\lambda\coarsep (n^2)$, namely $(2n)$ and $(n^2)$. 

Since 
$$X_{\san}=\sum_{\lambda\vdash 2n}[p_{\lambda}]X_{\san} p_{\lambda}$$
the coefficient of $e_{(n^2)}$ in $X_{\san}$ only arises from the  $p_{(2n)}$ and $p_{(n^2)}$ terms. In particular, $$[e_{(n^2)}]X_{\san}=[e_{(n^2)}]p_{(2n)}\cdot [p_{(2n)}]X_{\san}+[e_{(n^2)}]p_{(n^2)}\cdot[p_{(n^2)}]X_{\san}.$$ Using  Lemma~\ref{lem:powercoeffgraph} and Lemma~\ref{lem:elemcoeffpower} we therefore have
\begin{align*}
[e_{(n^2)}]X_{\san}&=[e_{(n^2)}]p_{(2n)}\cdot [p_{(2n)}]X_{\san}+[e_{(n^2)}]p_{(n^2)}\cdot[p_{(n^2)}]X_{\san}\\
&=n\cdot (-3n^2+4n-2)+n^2\cdot (2n-1)\\
&=-n(n-1)(n-2).
\end{align*}
\end{proof}


We can now identify our first family of graphs that are \ccf, and whose chromatic symmetric functions are not $e$-positive.

\begin{theorem}\label{the:sanotpositive} The graphs $\san$ for all $n\geq 3$ are \ccf\ and not $e$-positive.
\end{theorem}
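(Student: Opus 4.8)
The plan is to obtain the theorem as an immediate consequence of the two structural results already established for saltire graphs: the statement has two halves, that $\san$ is \ccf\ and that $\san$ is not $e$-positive, and each half is discharged by a lemma proved above. So the proof proper is just the conjunction of Lemma~\ref{lem:sa_ccf} and Lemma~\ref{lem:sa_notepos}, applied with $n\geq 3$.

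First, for the \ccf\ half I would cite Lemma~\ref{lem:sa_ccf}, which shows $\sa_{a,b}$ is \ccf\ for all $a,b\geq 2$; specializing to $a=b=n$ gives that $\san$ is \ccf. The substance there is the application of the Brouwer--Veldman characterization (Lemma~\ref{lem:BV}): deleting any three independent vertices from $\sa_{a,b}$ disconnects it, because the pigeonhole principle forces two of the three to lie on the $a$-path or the $b$-path, and removing any two non-adjacent vertices from one of these paths already disconnects the graph.

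Second, for the non-$e$-positivity half I would cite Lemma~\ref{lem:sa_notepos}, which produces the explicit coefficient $[e_{(n^2)}]X_{\san}=-n(n-1)(n-2)$. For $n\geq 3$ this is strictly negative, so $X_{\san}$ is not a non-negative linear combination of elementary symmetric functions, and hence $\san$ is not $e$-positive. Combining the two halves yields the theorem.

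The only genuine labor sits upstream, in Lemma~\ref{lem:powercoeffgraph} and Lemma~\ref{lem:elemcoeffpower}, both of which we may assume here, so the main obstacle in the development is really the bookkeeping in those lemmas rather than anything in the theorem itself. Concretely, in Lemma~\ref{lem:powercoeffgraph} one uses Stanley's expansion (Lemma~\ref{lem:stanley}) together with Observation~\ref{obs:newton} to restrict attention to edge subsets $S$ with $\lambda(S)\coarsep(n^2)$, i.e.\ every piece of size at least $n$; this forces $S$ to omit at most one edge from each $n$-path, leaving the four middle edges to vary freely, and one then sums the signed contributions $(-1)^{|S|}$ over the five cases $|E|-|S|\in\{0,1,2,3,4\}$ organized by Table~\ref{table:G_2noptions}. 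Lemma~\ref{lem:elemcoeffpower} then converts $[p_{(2n)}]X_{\san}$ and $[p_{(n^2)}]X_{\san}$ into $[e_{(n^2)}]X_{\san}$ via Newton's identity \eqref{eq:newton}. With those coefficients in hand, the present Theorem~\ref{the:sanotpositive} follows with no further work.
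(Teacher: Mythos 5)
Your proposal matches the paper's proof exactly: Theorem~\ref{the:sanotpositive} is deduced immediately from Lemma~\ref{lem:sa_ccf} (the \ccf\ half) and Lemma~\ref{lem:sa_notepos} (the negative coefficient $[e_{(n^2)}]X_{\san}=-n(n-1)(n-2)$, which is strictly negative for $n\geq 3$). Your summary of the upstream bookkeeping in Lemmas~\ref{lem:powercoeffgraph} and~\ref{lem:elemcoeffpower} is also accurate.
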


\begin{proof} This follows immediately from Lemmas~\ref{lem:sa_ccf} and \ref{lem:sa_notepos}.
\end{proof}

Since Theorem~\ref{the:sanotpositive} yields an infinite family of graphs with an even number of vertices that are \ccf\ and not $e$-positive, a natural question to ask is whether an infinite family of graphs exists with $N$ vertices, for all $N\geq 6$, which are \ccf\ and not $e$-positive. Such a family exists and is the family of \emph{augmented} saltire graphs, which we introduce now.

The \emph{augmented saltire graph} $\as _{a,b}$, where $a\geq 2, b\geq 3$, is the saltire graph $\sa _{a,b}$ with the additional edge $v_1v_{a+2}$. More precisely, $\as _{a,b}$, where $a\geq 2, b\geq 3$, is the graph on $a+b$  vertices $\{v_1, v_2, \ldots , v_{a+b}\}$ with edges $v_iv_{i+1}$ for $1\leq i \leq a+b-1$, $v_{a+b}v_1$, $v_1v_{a+1}$, $v_2v_{a+2}$ and $v_1v_{a+2}$. For example, $\as _{3,3}$,  $\as _{3,4}$ and a graphical representation of a generic $\as _{a,b}$ are given in Figure~\ref{fig:augsaltires}. 

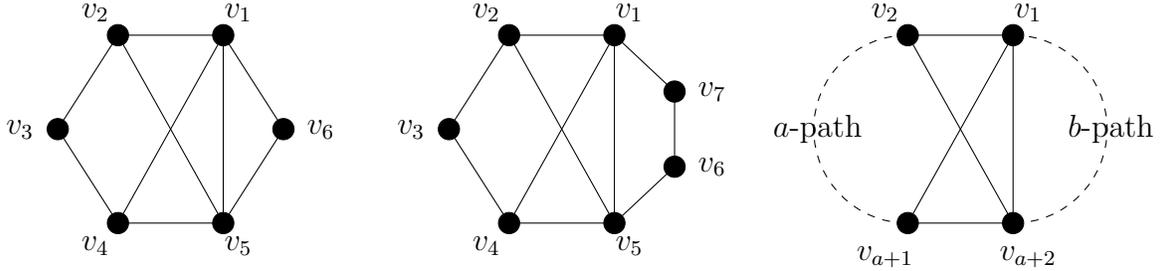
\begin{figure}[h]
\begin{center}
\begin{tikzpicture}[scale=1]
\begin{scope}[shift={(0,0)}]
\coordinate (1) at (0,0);
\coordinate (2) at (.8,1.25);
\coordinate (3) at (2.2,1.25);
\coordinate (4) at (3,0);
\coordinate (5) at (2.2,-1.25);
\coordinate (6) at  (.8,-1.25);
\draw (-.5,0) node {$v_3$};
\draw (.5,1.55) node {$v_2$};
\draw (2.4,1.55) node {$v_1$};
\draw (3.5,0) node {$v_6$};
\draw (2.4,-1.55) node {$v_5$};
\draw (.5,-1.55) node {$v_4$};
\filldraw [black] (1) circle (4pt);
\filldraw [black] (2) circle (4pt);
\filldraw [black] (3) circle (4pt);
\filldraw [black] (4) circle (4pt);
\filldraw [black] (5) circle (4pt);
\filldraw [black] (6) circle (4pt);
\draw[->] (2)--(5);
\draw[->] (3)--(6);
\draw[->] (3)--(5);
\draw[->] (1)--(2)--(3)--(4)--(5)--(6)--(1);
\end{scope}
\begin{scope}[shift={(5.2,0)}]
\coordinate (1) at (0,0);
\coordinate (2) at (.8,1.25);
\coordinate (3) at (2.2,1.25);
\coordinate (4) at (3,-.5);
\coordinate (5) at (2.2,-1.25);
\coordinate (6) at  (.8,-1.25);
\coordinate (7) at  (3,.5);
\draw (-.5,0) node {$v_3$};
\draw (.5,1.55) node {$v_2$};
\draw (2.4,1.55) node {$v_1$};
\draw (3.5,.5) node {$v_7$};
\draw (3.5,-.5) node {$v_6$};
\draw (2.4,-1.55) node {$v_5$};
\draw (.5,-1.55) node {$v_4$};
\filldraw [black] (1) circle (4pt);
\filldraw [black] (2) circle (4pt);
\filldraw [black] (3) circle (4pt);
\filldraw [black] (4) circle (4pt);
\filldraw [black] (5) circle (4pt);
\filldraw [black] (6) circle (4pt);
\filldraw [black] (7) circle (4pt);
\draw[->] (2)--(5);
\draw[->] (3)--(6);
\draw[->] (3)--(5);
\draw[->] (1)--(2)--(3)--(7)--(4)--(5)--(6)--(1);
\end{scope}
\begin{scope}[shift={(10.5,0)}]
\coordinate (1) at (0,0);
\coordinate (2) at (.8,1.25);
\coordinate (3) at (2.2,1.25);
\coordinate (4) at (3,0);
\coordinate (5) at (2.2,-1.25);
\coordinate (6) at  (.8,-1.25);
\draw[dashed] (2) arc (90:270:1.25);
\draw[dashed] (3) arc (90:-90:1.25);
\draw (-.4,0) node {$a$-path};
\draw (3.5,0) node {$b$-path};
\draw (.5,1.55) node {$v_2$};
\draw (2.4,1.55) node {$v_1$};
\draw (2.4,-1.7) node {$v_{a+2}$};
\draw (.5,-1.7) node {$v_{a+1}$};
\filldraw [black] (2) circle (4pt);
\filldraw [black] (3) circle (4pt);
\filldraw [black] (5) circle (4pt);
\filldraw [black] (6) circle (4pt);
\draw[->] (2)--(5);
\draw[->] (3)--(6);
\draw[->] (3)--(5);
\draw[->] (2)--(3);
\draw[->] (5)--(6);
\end{scope}
\end{tikzpicture}
\end{center}
\caption{From left to right we have $\as _{3,3}$, $\as _{3,4}$ and a generic $\as_{a,b}$.}
\label{fig:augsaltires}
\end{figure}

Using proofs very similar to those for saltire graphs, but with some additional cases generated by the edge $v_1v_{a+2}$, we can obtain the following two lemmas.

\begin{lemma}\label{lem:as_ccf} For all $a\geq 2, b\geq 3$ the graph $\as _{a,b}$ is \ccf. In particular, for $n\geq 3$ the graphs $\aseven$ and $\asodd$ are \ccf.
\end{lemma}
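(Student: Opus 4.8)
The plan is to invoke Lemma~\ref{lem:BV}, exactly as was done for the saltire graphs in Lemma~\ref{lem:sa_ccf}, since $\as_{a,b}$ is obtained from $\sa_{a,b}$ by adding one edge and so retains all the structure exploited there. Thus it suffices to show that the deletion of any three independent vertices from $\as_{a,b}$ leaves a disconnected graph. The augmented saltire graph decomposes into an $a$-path on vertices $v_2, v_3, \ldots, v_{a+1}$, a $b$-path on vertices $v_{a+2}, v_{a+3}, \ldots, v_{a+b}, v_1$, and a middle attaching $\{v_1, v_2\}$ to $\{v_{a+1}, v_{a+2}\}$ (now with the extra edge $v_1 v_{a+2}$). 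The key point is that removing any two non-adjacent vertices from a path disconnects it, and this fact is unaffected by the added middle edge.

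First I would set up the pigeonhole argument: given three independent vertices $x, y, z$ of $\as_{a,b}$, since the vertex set is partitioned between the $a$-path (on $a$ vertices) and the $b$-path (on $b$ vertices), at least two of $x, y, z$ lie on the same one of these two paths, say the $a$-path. Being independent in $\as_{a,b}$, these two vertices are in particular non-adjacent along that path, so they are non-consecutive vertices $v_i, v_j$ with $2 \leq i, i+1 < j \leq a+1$. Deleting $v_i$ and $v_j$ (and hence at least these two vertices out of our three) severs the $a$-path into at least two nonempty segments, and I would check that the only vertices through which these interior segments could reconnect to the rest of the graph are precisely $v_i, v_j$ or $v_1, v_2, v_{a+1}, v_{a+2}$ — and a segment strictly between $v_i$ and $v_j$ touches none of these, hence is isolated from the rest after deletion. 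The same argument applies verbatim if the two co-path vertices lie on the $b$-path instead, using $v_1$ in place of the wrap-around vertex as in Figure~\ref{fig:augsaltires}. The extra edge $v_1 v_{a+2}$ only ever adds adjacencies among the four "boundary" vertices $v_1, v_2, v_{a+1}, v_{a+2}$ and so cannot provide a new connection to an interior path segment.

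The specialization to $\aseven$ (taking $a = b = n \geq 3$) and $\asodd$ (taking $a = n \geq 3$, $b = n+1 \geq 4$) is then immediate, since both satisfy the hypotheses $a \geq 2$, $b \geq 3$. I do not anticipate a genuine obstacle here; the only mild subtlety, compared to Lemma~\ref{lem:sa_ccf}, is the bookkeeping needed to confirm that the new edge $v_1 v_{a+2}$ does not create a bridge across a severed path segment, but since that edge joins two of the boundary vertices and interior segments of a path avoid all boundary vertices by construction, this is a one-line verification rather than a new case analysis.
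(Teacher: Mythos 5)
Your proposal is correct and follows essentially the same route as the paper, which proves this lemma by noting it is "very similar" to Lemma~\ref{lem:sa_ccf}: apply Lemma~\ref{lem:BV}, pigeonhole two of the three independent vertices onto the $a$-path or the $b$-path, and observe that deleting two non-adjacent vertices from that path isolates the interior segment, the only extra check being that the new edge $v_1v_{a+2}$ joins two boundary vertices and so cannot reconnect an interior segment. Your explicit verification of that last point is exactly the "additional case" the paper alludes to.
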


\begin{lemma}\label{lem:coeffG^*n,n+1} For $n\geq 3$ we have that
\begin{enumerate}
\item $[p_{(2n)}]X_{\aseven}= -4n^2+6n-2$,
\item $[p_{(n^2)}]X_{\aseven}= 3n-3$,
\item $[p_{(2n+1)}]X_{\asodd}= 4n^2-2n$ and
\item $[p_{(n+1,n)}]X_{\asodd}=-7n+4$.
\end{enumerate}
\end{lemma}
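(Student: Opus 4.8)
The plan is to run the argument of Lemma~\ref{lem:powercoeffgraph} again, now for $\aseven$ and $\asodd$. By Lemma~\ref{lem:stanley} we sum $(-1)^{|S|}p_{\lambda(S)}$ over $S\subseteq E$ and discard every $S$ for which $\lambda(S)$ has a part smaller than $n$. Exactly as before, such a relevant $S$ can delete at most one edge from the left $n$-path and at most one edge from the right path (the $n$-path for $\aseven$, the $(n+1)$-path for $\asodd$): deleting two edges from a path on $n$ or $n+1$ vertices always isolates an interior sub-path on at most $n-1$ vertices, and none of the middle edges --- including the new diagonal $v_1v_{a+2}$, which meets only the two endpoints of the right path --- can reattach it. So every relevant $S$ deletes some subset of the \emph{five} middle edges $v_1v_2,\ v_1v_{a+1},\ v_2v_{a+2},\ v_{a+1}v_{a+2},\ v_1v_{a+2}$ together with at most one left-path edge and at most one right-path edge.

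I would first build the analogue of Table~\ref{table:G_2noptions} for $\as_{a,b}$: for each $i=1,\dots,5$, list the $\binom{5}{i}$ ways to delete $i$ middle edges and record whether the resulting configuration on $\{v_1,v_2,v_{a+1},v_{a+2}\}$ is connected and, if not, how it splits. The one genuinely new phenomenon relative to the saltire is the diagonal $v_1v_{a+2}$: since it joins the two endpoints of the right path, it does not change which vertex sets can arise as the two halves of a split, but it makes the right half carry a cycle. Consequently some middle-edge deletions that disconnect a saltire graph keep an augmented one connected, and the right half can absorb one extra deleted right-path edge and still stay connected.

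With that table I would carry out the same five-case split on $|E|-|S|\in\{0,1,2,3,4,5\}$ as in Lemma~\ref{lem:powercoeffgraph}. For $\aseven$ we have $|E|=2n+3$ and $N=2n$, and the only relevant partitions are $(2n)$ (spanning subgraph connected) and $(n^2)$ (it splits into two size-$n$ pieces). The $(2n)$ terms accumulate a sign-weighted polynomial count; the $(n^2)$ terms come from the configurations that separate $\{v_2,\dots,v_{a+1}\}$ from $\{v_1,v_{a+2},\dots\}$ --- all four crossing middle edges deleted, with the diagonal either deleted or kept, and when it is kept possibly one right-path edge deleted as well --- together with the two families of sliding cuts that keep a pair of opposite middle edges and delete one left-path edge and the matching right-path edge. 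Summing with signs $(-1)^{|S|}$ and multiplicities $n-1$ should give $[p_{(2n)}]X_{\aseven}=-4n^2+6n-2$ and $[p_{(n^2)}]X_{\aseven}=3n-3$. For $\asodd$ the scheme is identical with $|E|=2n+4$, $N=2n+1$, and target partitions $(2n+1)$ and $(n+1,n)$; the only extra care is that the two parts of a split now have the distinct sizes $n+1$ and $n$, so for each separating configuration one must decide which side receives the extra vertex. This should yield $[p_{(2n+1)}]X_{\asodd}=4n^2-2n$ and $[p_{(n+1,n)}]X_{\asodd}=-7n+4$.

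The main obstacle is the bookkeeping: the five-edge middle together with the diagonal multiplies the number of subcases inside each $|E|-|S|$ case, and it is easy to overlook a separating configuration --- for instance one deleting a single edge from each path plus three middle edges --- that feeds $[p_{(n^2)}]$ or $[p_{(n+1,n)}]$. Tracking signs and the $n-1$ (or $n$) choices of path edge is otherwise routine, exactly parallel to Lemma~\ref{lem:powercoeffgraph}. As a check I would match the $\aseven$ formulas against the stated data for $X_{\as_{3,3}}$ and verify $X_{\as_{3,4}}$ by direct computation.
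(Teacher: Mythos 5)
Your plan is exactly the approach the paper intends: the paper omits the proof of this lemma entirely, saying only that it follows by arguments ``very similar'' to those for Lemma~\ref{lem:powercoeffgraph} with extra cases from the edge $v_1v_{a+2}$, and carrying out the case analysis you describe (five middle edges, at most one edge deleted from each $n$-path, signs $(-1)^{|E|-t}$ for $t$ deleted edges) does reproduce all four stated coefficients. One caution for the execution that goes slightly beyond the ``which side gets the extra vertex'' bookkeeping you mention: in $\asodd$ the right path has $n+1$ vertices, so a sub-segment of it of size exactly $n$ can be cut off \emph{on its own} (for instance, keeping among the middle edges only $v_1v_2$ and $v_1v_{n+1}$ and deleting the right-path edge $v_{2n+1}v_1$ isolates $\{v_{n+2},\dots,v_{2n+1}\}$), producing separating configurations with no analogue in the even case; these account for part of the $-7n+4$.
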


We also obtain the following lemma whose proof is analogous to that of Lemma~\ref{lem:elemcoeffpower}.

\begin{lemma}\label{lem:elemcoeffpower2} For $n\geq 1$ we have that
\begin{enumerate}
\item $[e_{(n+1, n)}]p_{(2n+1)}=-(2n+1)$ and
\item $[e_{(n+1, n)}]p_{(n+1,n)}= -n(n+1)$.
\end{enumerate}
\end{lemma}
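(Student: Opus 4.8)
The plan is to mimic the proof of Lemma~\ref{lem:elemcoeffpower}, using Equation~\eqref{eq:newton} as the only input. For part (1), observe that $\mu=(n+1,n)$ is a partition of $2n+1$ with $\ell(\mu)=2$, and since $n+1\neq n$ we have $m_n=m_{n+1}=1$ with all other $m_j=0$, so $\prod_{j=1}^{2n+1}m_j!=1$. Expanding $p_{2n+1}$ in the elementary basis via Equation~\eqref{eq:newton}, the term indexed by $\mu$ contributes $e_{(n+1,n)}$ with coefficient
$$(-1)^{(2n+1)-\ell(\mu)}\,\frac{(2n+1)(\ell(\mu)-1)!}{\prod_j m_j!}=(-1)^{2n-1}(2n+1)=-(2n+1),$$
and since each $e_\mu$ with $\mu\vdash 2n+1$ occurs exactly once in that expansion, this is precisely $[e_{(n+1,n)}]p_{(2n+1)}$.

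For part (2) I would first record, again directly from Equation~\eqref{eq:newton}, the single-part coefficients $[e_{n+1}]p_{n+1}=(-1)^{n}(n+1)$ and $[e_n]p_n=(-1)^{n-1}n$. Then, writing $p_{(n+1,n)}=p_{n+1}p_n$ and expanding both factors in the $e$-basis, a product $e_\alpha e_\beta$ arising this way (with $\alpha\vdash n+1$, $\beta\vdash n$) equals $e_{(n+1,n)}=e_{n+1}e_n$ exactly when the multiset of parts of $\alpha$ together with those of $\beta$ equals $\{n+1,n\}$. Since $\alpha$ and $\beta$ are each nonempty, each must contribute exactly one part, forcing $\alpha=(n+1)$ and $\beta=(n)$ (the block summing to $n+1$ must be the singleton $n+1$, since $\beta$'s parts sum to only $n$). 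Hence
$$[e_{(n+1,n)}]p_{(n+1,n)}=[e_{n+1}]p_{n+1}\cdot[e_n]p_n=(-1)^{n}(n+1)\cdot(-1)^{n-1}n=-n(n+1).$$

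The computation is entirely routine; the only point that is not pure substitution is the uniqueness claim in part (2) — that no pair $(\alpha,\beta)$ other than $((n+1),(n))$ contributes to the coefficient of $e_{(n+1,n)}$ — which follows from the parts-counting observation above and mirrors the analogous step in the proof of Lemma~\ref{lem:elemcoeffpower}. I expect no genuine obstacle; the one thing to be careful about is that $n+1$ and $n$ are always distinct parts, so no multinomial factor $m_j!>1$ ever intervenes in either part.
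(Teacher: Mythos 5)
Your proof is correct and follows exactly the route the paper intends: the paper omits the proof of this lemma, stating only that it is analogous to that of Lemma~\ref{lem:elemcoeffpower}, which is precisely the Newton-formula substitution for part (1) and the product-of-single-part-coefficients argument for part (2) that you give. Your extra care with the uniqueness of the pair $(\alpha,\beta)$ and with the fact that $m_{n}!\,m_{n+1}!=1$ here (versus $2!$ in the $(n^2)$ case) is exactly the right adaptation.
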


From  here we are able to determine the $e$-positivity of $X_{\aseven}$ and $X_{\asodd}$ for $n\geq 3$.

\begin{lemma}\label{lem:as_notepos} 
The chromatic symmetric functions of ${\aseven}$ and ${\asodd}$ for $n\geq 3$ are not $e$-positive. In particular, we have that
$$[e_{(n^2)}]X_{\aseven}=[e_{(n+1,n)}]X_{\asodd}=-n(n-1)(n-2).$$
\end{lemma}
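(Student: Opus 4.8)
The plan is to mimic the proof of Lemma~\ref{lem:sa_notepos}, isolating a single negative coefficient in the elementary basis by routing through the power sum basis. Consider first $\aseven$. By Observation~\ref{obs:newton} the coefficient $[e_{(n^2)}]X_{\aseven}$ receives contributions only from the $p_\lambda$ with $\lambda\vdash 2n$ and $\lambda\coarsep (n^2)$, and since $(n^2)$ has only two parts the only such $\lambda$ are $(2n)$ and $(n^2)$. Writing $X_{\aseven}=\sum_{\lambda\vdash 2n}[p_\lambda]X_{\aseven}\,p_\lambda$, this gives
$$[e_{(n^2)}]X_{\aseven}=[e_{(n^2)}]p_{(2n)}\cdot[p_{(2n)}]X_{\aseven}+[e_{(n^2)}]p_{(n^2)}\cdot[p_{(n^2)}]X_{\aseven}.$$
Substituting $[e_{(n^2)}]p_{(2n)}=n$ and $[e_{(n^2)}]p_{(n^2)}=n^2$ from Lemma~\ref{lem:elemcoeffpower}, together with $[p_{(2n)}]X_{\aseven}=-4n^2+6n-2$ and $[p_{(n^2)}]X_{\aseven}=3n-3$ from Lemma~\ref{lem:coeffG^*n,n+1}, and simplifying yields $n(-4n^2+6n-2)+n^2(3n-3)=-n^3+3n^2-2n=-n(n-1)(n-2)$.

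For $\asodd$ the argument has exactly the same shape with target partition $(n+1,n)\vdash 2n+1$: the only $\lambda\vdash 2n+1$ with $\lambda\coarsep (n+1,n)$ are $(2n+1)$ and $(n+1,n)$, so
$$[e_{(n+1,n)}]X_{\asodd}=[e_{(n+1,n)}]p_{(2n+1)}\cdot[p_{(2n+1)}]X_{\asodd}+[e_{(n+1,n)}]p_{(n+1,n)}\cdot[p_{(n+1,n)}]X_{\asodd}.$$
Plugging in $[e_{(n+1,n)}]p_{(2n+1)}=-(2n+1)$ and $[e_{(n+1,n)}]p_{(n+1,n)}=-n(n+1)$ from Lemma~\ref{lem:elemcoeffpower2}, and $[p_{(2n+1)}]X_{\asodd}=4n^2-2n$, $[p_{(n+1,n)}]X_{\asodd}=-7n+4$ from Lemma~\ref{lem:coeffG^*n,n+1}, and expanding gives $-(2n+1)(4n^2-2n)+(-n(n+1))(-7n+4)=-8n^3+2n+7n^3+3n^2-4n=-n(n-1)(n-2)$. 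Finally, since $-n(n-1)(n-2)<0$ for $n\geq 3$, both $X_{\aseven}$ and $X_{\asodd}$ have a strictly negative coefficient in the elementary basis (on $e_{(n^2)}$ and $e_{(n+1,n)}$ respectively), hence are not $e$-positive.

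No real obstacle remains: the substance of the computation has been pushed into Lemma~\ref{lem:coeffG^*n,n+1}, where the power-sum coefficients are read off from the edge-subset expansion of Lemma~\ref{lem:stanley} (and where the extra cases created by the edge $v_1v_{a+2}$ must be tracked), and into Lemma~\ref{lem:elemcoeffpower2}, which is the Newton-identity computation via Equation~\eqref{eq:newton}. What is left for this lemma is the two linear combinations and the sign check, plus the easy observation that in each case exactly two power sums contribute because $(n^2)$ and $(n+1,n)$ each have only two parts and hence only two coarsenings.
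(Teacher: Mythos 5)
Your proposal is correct and follows exactly the paper's own argument: restrict via Observation~\ref{obs:newton} to the two coarsenings of $(n^2)$ (resp.\ $(n+1,n)$), then combine Lemmas~\ref{lem:coeffG^*n,n+1}, \ref{lem:elemcoeffpower} and \ref{lem:elemcoeffpower2} in the stated linear combinations; your arithmetic in both cases checks out. Nothing further is needed.
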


\begin{proof}
By Observation~\ref{obs:newton} we can see that  $e_{(n^2)}$ has non-zero coefficient in $p_{\lambda}$ only for  $\lambda\vdash 2n$ with $\lambda\coarsep (n^2)$. There are only two partitions $\lambda\vdash 2n$ where $\lambda\coarsep (n^2)$, namely $(2n)$ and $(n^2)$. 

Since 
$$X_{\aseven}=\sum_{\lambda\vdash 2n}[p_{\lambda}]X_{\aseven} p_{\lambda}$$
the coefficient of $e_{(n^2)}$ in $X_{\aseven}$ only arises from the  $p_{(2n)}$ and $p_{(n^2)}$ terms. In particular, $$[e_{(n^2)}]X_{\aseven}=[e_{(n^2)}]p_{(2n)}\cdot [p_{(2n)}]X_{\aseven}+[e_{(n^2)}]p_{(n^2)}\cdot[p_{(n^2)}]X_{\aseven}.$$ Using  Lemma~\ref{lem:coeffG^*n,n+1} and Lemma~\ref{lem:elemcoeffpower}  we therefore have
\begin{align*}
[e_{(n^2)}]X_{\aseven}&=[e_{(n^2)}]p_{(2n)}\cdot [p_{(2n)}]X_{\aseven}+[e_{(n^2)}]p_{(n^2)}\cdot[p_{(n^2)}]X_{\aseven}\\
&=n\cdot (-4n^2+6n-2)+n^2\cdot (3n-3)\\
&=-n(n-1)(n-2).
\end{align*}

Again by Observation~\ref{obs:newton} we can see that  $e_{(n+1,n)}$ has non-zero coefficient in $p_{\lambda}$ only for  $\lambda\vdash 2n+1$ with $\lambda \coarsep (n+1,n)$. There are only two partitions $\lambda\vdash 2n+1$ where $\lambda\coarsep (n+1,n)$, namely $(2n+1)$ and $(n+1,n)$. 

Since 
$$X_{\asodd}=\sum_{\lambda\vdash 2n+1}[p_{\lambda}]X_{\asodd} p_{\lambda}$$
the coefficient of $e_{(n+1,n)}$ in $X_{\asodd}$ only arises from the  $p_{(2n+1)}$ and $p_{(n+1,n)}$ terms. In particular, $$[e_{(n+1,n)}]X_{\asodd}=[e_{(n+1,n)}]p_{(2n+1)}\cdot [p_{(2n+1)}]X_{\asodd}+[e_{(n+1,n)}]p_{(n+1,n)}\cdot[p_{(n+1,n)}]X_{\asodd}.$$Using  Lemma~\ref{lem:coeffG^*n,n+1} and Lemma~\ref{lem:elemcoeffpower2}  we therefore have
\begin{align*}
[e_{(n+1,n)}]X_{\asodd}&=[e_{(n+1,n)}]p_{(2n+1)}\cdot [p_{(2n+1)}]X_{\asodd}+[e_{(n+1,n)}]p_{(n+1,n)}\cdot[p_{(n+1,n)}]X_{\asodd}\\
&=-(2n+1)\cdot (4n^2-2n)-n(n+1)\cdot (-7n+4)\\
&=-n(n-1)(n-2).
\end{align*}
\end{proof}

We can now identify our second family of graphs that are \ccf\ and whose chromatic symmetric functions are not $e$-positive, and, moreover, one such graph exists with $N$ vertices for all $N\geq 6$.

\begin{theorem}\label{the:asnotpositive} The graphs $\aseven$ and $\asodd$ for all $n\geq 3$ are \ccf\ and not $e$-positive.
\end{theorem}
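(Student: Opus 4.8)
The plan is to read off Theorem~\ref{the:asnotpositive} directly from the two lemmas already established, exactly as Theorem~\ref{the:sanotpositive} was deduced from Lemmas~\ref{lem:sa_ccf} and~\ref{lem:sa_notepos}. The \ccf\ half is precisely Lemma~\ref{lem:as_ccf}, and the failure of $e$-positivity is precisely Lemma~\ref{lem:as_notepos}; combining them gives the theorem for every $n\geq 3$. Since $\aseven$ has $2n$ vertices and $\asodd$ has $2n+1$ vertices, this simultaneously yields one \ccf, non-$e$-positive graph on $N$ vertices for each $N\geq 6$, which is the point of introducing this second family.

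For completeness I would note how the supporting lemmas are obtained. Lemma~\ref{lem:as_ccf} is the verbatim argument of Lemma~\ref{lem:sa_ccf} via Lemma~\ref{lem:BV}: among any three independent vertices, the pigeonhole principle forces two into the $a$-path or the $b$-path, and deleting two non-adjacent path vertices disconnects $\as_{a,b}$; the added edge $v_1v_{a+2}$ touches only $v_1$ and $v_{a+2}$, so it creates no new route around a severed path. Lemma~\ref{lem:coeffG^*n,n+1} is the genuinely computational input: run the same subset-of-edges analysis as in Lemma~\ref{lem:powercoeffgraph}, using Lemma~\ref{lem:stanley} and keeping only $S\subseteq E$ with $\lambda(S)\in\{(2n),(n^2)\}$ (even case) or $\{(2n+1),(n+1,n)\}$ (odd case). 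As before, any $S$ omitting two edges of the left $n$-path or two of the right $n$-path produces a part smaller than $n$ and is discarded, so one only tracks $S$ omitting at most one edge from each path together with some subset of the middle; the new feature is that the middle now has five edges rather than four, so the table of "middle edges removed" must be redone and each configuration checked for connectivity (and, when four or more edges are dropped, for whether it splits into pieces of the correct sizes).

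Given Lemma~\ref{lem:coeffG^*n,n+1}, the proof of Lemma~\ref{lem:as_notepos} is pure arithmetic: by Observation~\ref{obs:newton} only $p_{(2n)},p_{(n^2)}$ (even) and $p_{(2n+1)},p_{(n+1,n)}$ (odd) can contribute, so combining those coefficients with the $e$-in-$p$ expansions of Lemmas~\ref{lem:elemcoeffpower} and~\ref{lem:elemcoeffpower2} gives $[e_{(n^2)}]X_{\aseven}=n(-4n^2+6n-2)+n^2(3n-3)=-n(n-1)(n-2)$ and $[e_{(n+1,n)}]X_{\asodd}=-(2n+1)(4n^2-2n)-n(n+1)(-7n+4)=-n(n-1)(n-2)$, both strictly negative for $n\geq 3$.

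The only real obstacle is the connectivity bookkeeping inside Lemma~\ref{lem:coeffG^*n,n+1}: with five middle edges one must count exactly the $S$ that leave $\as$ connected, and exactly the $S$ that split it into two pieces of the required sizes, since a single miscounted configuration flips the sign of the final coefficient. Everything downstream of that—assembling Lemma~\ref{lem:as_notepos} and then Theorem~\ref{the:asnotpositive}—is mechanical and a one-line deduction, respectively.
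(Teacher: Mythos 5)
Your proposal is correct and follows exactly the paper's route: the theorem is deduced in one line from Lemma~\ref{lem:as_ccf} and Lemma~\ref{lem:as_notepos}, and your sketches of those lemmas (the pigeonhole/Lemma~\ref{lem:BV} argument for being \ccf, the edge-subset analysis \`a la Lemma~\ref{lem:powercoeffgraph} with a five-edge middle for Lemma~\ref{lem:coeffG^*n,n+1}, and the arithmetic combining them with Lemmas~\ref{lem:elemcoeffpower} and~\ref{lem:elemcoeffpower2}) match what the paper does or indicates. Your closing computations $n(-4n^2+6n-2)+n^2(3n-3)=-(2n+1)(4n^2-2n)-n(n+1)(-7n+4)=-n(n-1)(n-2)$ are exactly the paper's proof of Lemma~\ref{lem:as_notepos}.
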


\begin{proof} This follows immediately from Lemmas~\ref{lem:as_ccf} and \ref{lem:as_notepos}.
\end{proof}

\section{Triangular tower graphs}\label{sec:towers}

 {Our final section is devoted to answering the question of} does there exist a graph that is \ccf\ \emph{and} claw-free whose chromatic symmetric function is not $e$-positive? By exhaustive computational search the smallest such example is

\begin{figure}[h]
\begin{center}
\begin{tikzpicture}[scale=.7]
\begin{scope}
\coordinate (1) at (-1,0);
\coordinate (2) at (0,1);
\coordinate (3) at (1,0);
\coordinate (4) at (0,3);
\coordinate (5) at (1,4);
\coordinate (6) at (-1,4);
\filldraw [black] (1) circle (4pt);
\filldraw [black] (2) circle (4pt);
\filldraw [black] (3) circle (4pt);
\filldraw [black] (4) circle (4pt);
\filldraw [black] (5) circle (4pt);
\filldraw [black] (6) circle (4pt);
\filldraw [black] (-1,2) circle (4pt);
\filldraw [black] (0,2) circle (4pt);
\filldraw [black] (1,2) circle (4pt);
\draw[->] (1)--(2);
\draw[->] (1)--(3);
\draw[->] (2)--(3);
\draw[->] (4)--(5);
\draw[->] (6)--(5);
\draw[->] (4)--(6);
\draw[->] (1)--(6);
\draw[->] (2)--(4);
\draw[->] (5)--(3);
\end{scope}
\end{tikzpicture}
\end{center}
\end{figure}

\noindent with chromatic symmetric function
\begin{align*}&12e_{(3,3,2,1)}-12e_{(3,3,3)}+102e_{(4,3,2)}+90e_{(4,4,1)}
+18e_{(5,2,2)}+96e_{(5,3,1)}\\&+294e_{(5,4)}+30e_{(6,2,1)}
+180e_{(6,3)}+342e_{(7,2)}+294e_{(8,1)}+666e_{(9)}\end{align*}and, moreover, it yields an infinite family of graphs that are \ccf, claw-free and not $e$-positive, the triangular tower graphs.

The \emph{triangular tower graph} $\trt _{a,b,c}$, where $a,b,c\geq 2$, is the graph on $a+b+c$ vertices
$$\{v_1,\ldots,v_{a}\}\cup\{v_{a+1},\ldots,v_{a+b}\}\cup\{v_{a+b+1},\ldots,v_{a+b+c}\}$$with edges
$v_iv_{i+1}$ for $$i\in \{1,\ldots ,a-1\}\cup\{a+1,\ldots, a+b-1\}\cup\{a+b+1,\ldots,a+b+c-1\}$$plus $\{v_1v_{a+1}, v_{a+1}v_{a+b+1}, v_{a+b+1}v_1\}$ and $\{v_av_{a+b}, v_{a+b}v_{a+b+c}, v_{a+b+c}v_a\}$. Informally we can visualize $\trt _{a,b,c}$ as consisting of three disjoint paths with, respectively, $a,b,c$ vertices where we take one leaf from each path and connect them in a triangle to form an induced $K_3$, and do the same with the remaining three leaves. For example, $\trt _{3,2,4}, \trt _{3,3,3}$ and a graphical representation of a generic $\trt _{a,b,c}$ are given in Figure~\ref{fig:tt}.

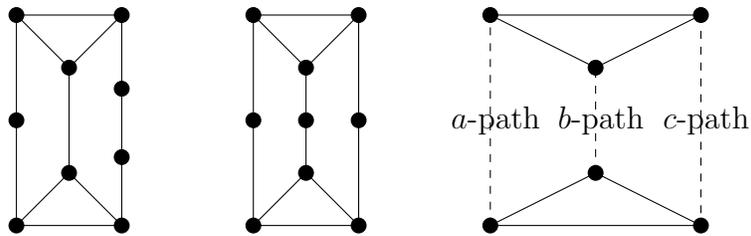
\begin{figure}[h]
\begin{center}
\begin{tikzpicture}[scale=.7]
\begin{scope}[shift={(0,0)}]
\coordinate (1) at (-1,0);
\coordinate (2) at (0,1);
\coordinate (3) at (1,0);
\coordinate (4) at (0,3);
\coordinate (5) at (1,4);
\coordinate (6) at (-1,4);
\filldraw [black] (1) circle (4pt);
\filldraw [black] (2) circle (4pt);
\filldraw [black] (3) circle (4pt);
\filldraw [black] (4) circle (4pt);
\filldraw [black] (5) circle (4pt);
\filldraw [black] (6) circle (4pt);
\filldraw [black] (-1,2) circle (4pt);
\filldraw [black] (1,1.3) circle (4pt);
\filldraw [black] (1,2.6) circle (4pt);
\draw[->] (1)--(2);
\draw[->] (1)--(3);
\draw[->] (2)--(3);
\draw[->] (4)--(5);
\draw[->] (6)--(5);
\draw[->] (4)--(6);
\draw[->] (1)--(6);
\draw[->] (2)--(4);
\draw[->] (5)--(3);
\end{scope}
\begin{scope}[shift={(4.5,0)}]
\coordinate (1) at (-1,0);
\coordinate (2) at (0,1);
\coordinate (3) at (1,0);
\coordinate (4) at (0,3);
\coordinate (5) at (1,4);
\coordinate (6) at (-1,4);
\filldraw [black] (1) circle (4pt);
\filldraw [black] (2) circle (4pt);
\filldraw [black] (3) circle (4pt);
\filldraw [black] (4) circle (4pt);
\filldraw [black] (5) circle (4pt);
\filldraw [black] (6) circle (4pt);
\filldraw [black] (-1,2) circle (4pt);
\filldraw [black] (0,2) circle (4pt);
\filldraw [black] (1,2) circle (4pt);
\draw[->] (1)--(2);
\draw[->] (1)--(3);
\draw[->] (2)--(3);
\draw[->] (4)--(5);
\draw[->] (6)--(5);
\draw[->] (4)--(6);
\draw[->] (1)--(6);
\draw[->] (2)--(4);
\draw[->] (5)--(3);
\end{scope}
\begin{scope}[shift={(10,0)}]
\coordinate (1) at (-2,0);
\coordinate (2) at (0,1);
\coordinate (3) at (2,0);
\coordinate (4) at (0,3);
\coordinate (5) at (2,4);
\coordinate (6) at (-2,4);
\filldraw [black] (1) circle (4pt);
\filldraw [black] (2) circle (4pt);
\filldraw [black] (3) circle (4pt);
\filldraw [black] (4) circle (4pt);
\filldraw [black] (5) circle (4pt);
\filldraw [black] (6) circle (4pt);
\draw[->] (1)--(2);
\draw[->] (1)--(3);
\draw[->] (2)--(3);
\draw[->] (4)--(5);
\draw[->] (6)--(5);
\draw[->] (4)--(6);
\draw[dashed] (1)--(6);
\draw[dashed] (2)--(4);
\draw[dashed] (5)--(3);
\draw (-1.9,2) node {$a$-path};
\draw (.1,2) node {$b$-path};
\draw (2.1,2) node {$c$-path};
\end{scope}
\end{tikzpicture}
\end{center}
\caption{From left to right we have $\trt_{3,2,4}$,  $\trt_{3,3,3}$ and a generic $\trt_{a,b,c}$.}
\label{fig:tt}
\end{figure}

Given $\trt _{a,b,c}$ we refer to the subgraphs induced by the edges
$$\{ v_iv_{i+1}\suchthat 1\leq i \leq a-1\}$$
as the \emph{$a$-path},
$$\{ v_iv_{i+1}\suchthat a+1\leq i \leq a+b-1\}$$
as the \emph{$b$-path} and
$$\{ v_iv_{i+1}\suchthat a+b+1\leq i \leq a+b+c-1\}$$
as the \emph{$c$-path}. Plus we refer to $\{v_1v_{a+1}, v_{a+1}v_{a+b+1}, v_{a+b+1}v_1\}$ as the \emph{top triangle}, and to $\{v_av_{a+b}, v_{a+b}v_{a+b+c}, v_{a+b+c}v_a\}$ as the \emph{bottom triangle}. 

As with the previous section we will focus on a subset of this family, namely $\trtn$. In this case we refer to the $a$-path, where $a=n$, as the \emph{left $n$-path}, the $b$-path, where $b=n$, as the \emph{middle $n$-path},  and the $c$-path, where $c=n$, as the \emph{right $n$-path}. We are now ready to ascertain the containment of the claw for this new family of graphs.

\begin{lemma}\label{lem:trt_ccf}  For all $a,b,c \geq 2$ the graph $\trt _{a,b,c}$ is \ccf\ and claw-free. In particular, for $n\geq 3$ the graph $\trtn$ is \ccf\ and claw-free.
\end{lemma}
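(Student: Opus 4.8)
The plan is to verify the two claw-related properties of $\trt_{a,b,c}$ separately, since each has its own characterization from the Background section.

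For the \ccf\ part, I would invoke Lemma~\ref{lem:BV}, so it suffices to show that deleting any three independent vertices from $\trt_{a,b,c}$ disconnects it. The key structural observation is that $\trt_{a,b,c}$ consists of three internally disjoint paths joined only at two triangles (the top and bottom). If two of the three deleted vertices lie on the same path among the $a$-, $b$-, $c$-paths (including possibly the triangle vertices that path contains), and those two vertices are non-adjacent interior-ish vertices of that path, the path is severed and the whole graph falls apart — so I would argue by cases on how the three independent vertices distribute among the three paths. The delicate sub-case is when exactly one deleted vertex lies on each of the three paths; here I would use that any vertex on the $a$-path (other than the two triangle corners $v_1,v_a$, but those corners also work by a similar argument) is a cut vertex separating its triangle corner on one side from the rest, and removing one vertex from each path cuts all three "rungs" of the tower, leaving the two triangles disconnected from the middles. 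One must also handle the small cases where $a$, $b$, or $c$ equals $2$, in which a path has no interior vertex, but then the two triangle vertices on that path are adjacent, forcing at most one of the independent vertices onto it, which pushes two onto another path and reduces to the earlier case. I expect this case analysis on the distribution of the three vertices among the three paths to be the main obstacle, though it is routine given the picture.

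For the claw-free part, I would apply the Beineke characterization, Lemma~\ref{lem:beineke}: it suffices to exhibit a partition of $E(\trt_{a,b,c})$ into edge sets each inducing a complete subgraph, with every vertex in at most two of the blocks. The natural partition is: each individual path-edge $v_iv_{i+1}$ forms its own block (each is a $K_2$), and the top triangle and bottom triangle each form one block (each is a $K_3$). Every vertex has degree at most $4$; the triangle-corner vertices $v_1, v_{a+1}, v_{a+b+1}, v_a, v_{a+b}, v_{a+b+c}$ each lie in exactly one triangle block plus one path-edge block (or, when a path has length $1$, two path-edge blocks and one triangle — wait, when $b=2$ say, $v_{a+1}$ and $v_{a+2}=v_{a+b}$ are joined by a single path-edge, so $v_{a+1}$ is in the top triangle, that one path-edge, and nothing else, i.e.\ two blocks); interior path vertices lie in exactly two path-edge blocks. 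So no vertex belongs to more than $2$ blocks, and claw-freeness follows.

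Finally, the ``in particular'' statement for $\trtn$ is the special case $a=b=c=n$ with $n\geq 3$, which is immediate from the general statement, so no extra work is needed there. I would present the \ccf\ argument first (the harder of the two), then the short Beineke verification, then conclude.
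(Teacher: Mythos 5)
Your proposal is correct and follows essentially the same route as the paper: Beineke's characterization (Lemma~\ref{lem:beineke}) with the two triangles as $K_3$ blocks and each remaining edge as a $K_2$ block for claw-freeness, and the Brouwer--Veldman criterion (Lemma~\ref{lem:BV}) with a case analysis on how the three independent vertices distribute over the three paths for the \ccf\ property. The paper's write-up is just a more compressed version of your case analysis, noting that two non-adjacent vertices on one path already disconnect the graph, and that the only failure mode for one vertex per path would require all three to lie in a single triangle, which is never an independent set.
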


\begin{proof} 
We first show that $\trt _{a,b,c}$ is claw-free by demonstrating a partition of the edges into disjoint sets such that every set edge induces a complete subgraph and no vertex belongs to more than two of the subgraphs. The result will then follow by Lemma~\ref{lem:beineke}. Note that such a partition is given by the edges in the top triangle, and the bottom triangle, edge inducing a $K_3$ subgraph each, and each remaining edge likewise edge inducing a $K_2$ subgraph.

Now we show that $\trt _{a,b,c}$ is \ccf\ by showing that the deletion of any three independent vertices from $\trt _{a,b,c}$ results in a disconnected graph. The result will then follow by Lemma~\ref{lem:BV}. Note that the removal of at least two non-adjacent vertices from either the $a$-path, $b$-path, or $c$-path results in a disconnected graph. Similarly the removal of one vertex from each of the $a$-path, the $b$-path, and the $c$-path of $\trt _{a,b,c}$ results in a disconnected graph unless all three vertices belong to the top triangle, or to the bottom triangle, but neither of these sets of three vertices is itself independent.
\end{proof}

Having proved that $\trt _{a,b,c}$ is both \ccf\ and claw-free we restrict our attention to $\trtn$ where $n\geq3$ and prove that its chromatic symmetric function is not $e$-positive by calculating the coefficient $[e_{(n^3)}] X_{\trtn}$. Note that $(3n),(2n,n)$ and $(n^3)$ are the only partitions that satisfy $\lambda \vdash 3n$ and $\lambda \coarsep (n^3)$ and hence by Obsevation~\ref{obs:newton} and Lemma~\ref{lem:stanley} in order to calculate $[e_{(n^3)}] X_{\trtn}$, we need to calculate $[p_{(3n)}]X_{\trtn}$, $[p_{(2n,n)}]X_{\trtn}$ and $[p_{(n^3)}]X_{\trtn}$, which we do in the following lemma using a case analysis that is similar to, but more substantial and delicate than,  Lemma~\ref{lem:powercoeffgraph}.


\begin{table}
\begin{center}
\begin{tabular}{| c |c|}
\hline
$i$& $\trtn$  with $i$ edges removed from the triangles\\
\hline
1&
6 of \hspace{.03in}
\begin{tikzpicture}[scale=.5]
\coordinate (1) at (-1,0);
\coordinate (2) at (0,1);
\coordinate (3) at (1,0);
\coordinate (4) at (0,2);
\coordinate (5) at (1,3);
\coordinate (6) at (-1,3);
\filldraw [black] (1) circle (4pt);
\filldraw [black] (2) circle (4pt);
\filldraw [black] (3) circle (4pt);
\filldraw [black] (4) circle (4pt);
\filldraw [black] (5) circle (4pt);
\filldraw [black] (6) circle (4pt);
\draw[->] (1)--(2);
\draw[->] (1)--(3);
\draw[->] (2)--(3);
\draw[->] (4)--(5);
\draw[->] (4)--(6);
\draw[->] (1)--(6);
\draw[->] (2)--(4);
\draw[->] (5)--(3);
\end{tikzpicture}\\
\hline
2& 
6 of  \hspace{.03in}
\begin{tikzpicture}[scale=.5]
\coordinate (1) at (-1,0);
\coordinate (2) at (0,1);
\coordinate (3) at (1,0);
\coordinate (4) at (0,2);
\coordinate (5) at (1,3);
\coordinate (6) at (-1,3);
\filldraw [black] (1) circle (4pt);
\filldraw [black] (2) circle (4pt);
\filldraw [black] (3) circle (4pt);
\filldraw [black] (4) circle (4pt);
\filldraw [black] (5) circle (4pt);
\filldraw [black] (6) circle (4pt);
\draw[->] (1)--(2);
\draw[->] (1)--(3);
\draw[->] (2)--(3);
\draw[->] (4)--(5);
\draw[->] (1)--(6);
\draw[->] (2)--(4);
\draw[->] (5)--(3);
\end{tikzpicture}, 
\hspace{.1in}3 of \hspace{.03in}
\begin{tikzpicture}[scale=.5]
\coordinate (1) at (-1,0);
\coordinate (2) at (0,1);
\coordinate (3) at (1,0);
\coordinate (4) at (0,2);
\coordinate (5) at (1,3);
\coordinate (6) at (-1,3);
\filldraw [black] (1) circle (4pt);
\filldraw [black] (2) circle (4pt);
\filldraw [black] (3) circle (4pt);
\filldraw [black] (4) circle (4pt);
\filldraw [black] (5) circle (4pt);
\filldraw [black] (6) circle (4pt);
\draw[->] (1)--(2);
\draw[->] (2)--(3);
\draw[->] (4)--(5);
\draw[->] (4)--(6);
\draw[->] (1)--(6);
\draw[->] (2)--(4);
\draw[->] (5)--(3);
\end{tikzpicture},
\hspace{.1in}6 of \hspace{.03in}
\begin{tikzpicture}[scale=.5]
\coordinate (1) at (-1,0);
\coordinate (2) at (0,1);
\coordinate (3) at (1,0);
\coordinate (4) at (0,2);
\coordinate (5) at (1,3);
\coordinate (6) at (-1,3);
\filldraw [black] (1) circle (4pt);
\filldraw [black] (2) circle (4pt);
\filldraw [black] (3) circle (4pt);
\filldraw [black] (4) circle (4pt);
\filldraw [black] (5) circle (4pt);
\filldraw [black] (6) circle (4pt);
\draw[->] (1)--(3);
\draw[->] (2)--(3);
\draw[->] (4)--(5);
\draw[->] (4)--(6);
\draw[->] (1)--(6);
\draw[->] (2)--(4);
\draw[->] (5)--(3);
\end{tikzpicture}\\
\hline
3&
12 of \hspace{.03in}
\begin{tikzpicture}[scale=.5]
\coordinate (1) at (-1,0);
\coordinate (2) at (0,1);
\coordinate (3) at (1,0);
\coordinate (4) at (0,2);
\coordinate (5) at (1,3);
\coordinate (6) at (-1,3);
\filldraw [black] (1) circle (4pt);
\filldraw [black] (2) circle (4pt);
\filldraw [black] (3) circle (4pt);
\filldraw [black] (4) circle (4pt);
\filldraw [black] (5) circle (4pt);
\filldraw [black] (6) circle (4pt);
\draw[->] (1)--(2);
\draw[->] (4)--(5);
\draw[->] (4)--(6);
\draw[->] (1)--(6);
\draw[->] (2)--(4);
\draw[->] (5)--(3);
\end{tikzpicture}, 
\hspace{.1in}6 of \hspace{.03in}
\begin{tikzpicture}[scale=.5]
\coordinate (1) at (-1,0);
\coordinate (2) at (0,1);
\coordinate (3) at (1,0);
\coordinate (4) at (0,2);
\coordinate (5) at (1,3);
\coordinate (6) at (-1,3);
\filldraw [black] (1) circle (4pt);
\filldraw [black] (2) circle (4pt);
\filldraw [black] (3) circle (4pt);
\filldraw [black] (4) circle (4pt);
\filldraw [black] (5) circle (4pt);
\filldraw [black] (6) circle (4pt);
\draw[->] (1)--(3);
\draw[->] (4)--(5);
\draw[->] (4)--(6);
\draw[->] (1)--(6);
\draw[->] (2)--(4);
\draw[->] (5)--(3);
\end{tikzpicture},
\hspace{.1in}2 of \hspace{.03in}
\begin{tikzpicture}[scale=.5]
\coordinate (1) at (-1,0);
\coordinate (2) at (0,1);
\coordinate (3) at (1,0);
\coordinate (4) at (0,2);
\coordinate (5) at (1,3);
\coordinate (6) at (-1,3);
\filldraw [black] (1) circle (4pt);
\filldraw [black] (2) circle (4pt);
\filldraw [black] (3) circle (4pt);
\filldraw [black] (4) circle (4pt);
\filldraw [black] (5) circle (4pt);
\filldraw [black] (6) circle (4pt);
\draw[->] (1)--(2);
\draw[->] (1)--(3);
\draw[->] (2)--(3);
\draw[->] (1)--(6);
\draw[->] (2)--(4);
\draw[->] (5)--(3);
\end{tikzpicture}\\
\hline
4&
6 of \hspace{.03in}
\begin{tikzpicture}[scale=.5]
\coordinate (1) at (-1,0);
\coordinate (2) at (0,1);
\coordinate (3) at (1,0);
\coordinate (4) at (0,2);
\coordinate (5) at (1,3);
\coordinate (6) at (-1,3);
\filldraw [black] (1) circle (4pt);
\filldraw [black] (2) circle (4pt);
\filldraw [black] (3) circle (4pt);
\filldraw [black] (4) circle (4pt);
\filldraw [black] (5) circle (4pt);
\filldraw [black] (6) circle (4pt);
\draw[->] (1)--(2);
\draw[->] (2)--(3);
\draw[->] (1)--(6);
\draw[->] (2)--(4);
\draw[->] (5)--(3);
\end{tikzpicture}, 
\hspace{.1in}6 of \hspace{.03in}
\begin{tikzpicture}[scale=.5]
\coordinate (1) at (-1,0);
\coordinate (2) at (0,1);
\coordinate (3) at (1,0);
\coordinate (4) at (0,2);
\coordinate (5) at (1,3);
\coordinate (6) at (-1,3);
\filldraw [black] (1) circle (4pt);
\filldraw [black] (2) circle (4pt);
\filldraw [black] (3) circle (4pt);
\filldraw [black] (4) circle (4pt);
\filldraw [black] (5) circle (4pt);
\filldraw [black] (6) circle (4pt);
\draw[->] (1)--(2);
\draw[->] (4)--(5);
\draw[->] (1)--(6);
\draw[->] (2)--(4);
\draw[->] (5)--(3);
\end{tikzpicture},
\hspace{.1in}3 of \hspace{.03in}
\begin{tikzpicture}[scale=.5]
\coordinate (1) at (-1,0);
\coordinate (2) at (0,1);
\coordinate (3) at (1,0);
\coordinate (4) at (0,2);
\coordinate (5) at (1,3);
\coordinate (6) at (-1,3);
\filldraw [black] (1) circle (4pt);
\filldraw [black] (2) circle (4pt);
\filldraw [black] (3) circle (4pt);
\filldraw [black] (4) circle (4pt);
\filldraw [black] (5) circle (4pt);
\filldraw [black] (6) circle (4pt);
\draw[->] (2)--(3);
\draw[->] (4)--(5);
\draw[->] (1)--(6);
\draw[->] (2)--(4);
\draw[->] (5)--(3);
\end{tikzpicture}\\
\hline
5&
6 of \hspace{.03in}
\begin{tikzpicture}[scale=.5]
\coordinate (1) at (-1,0);
\coordinate (2) at (0,1);
\coordinate (3) at (1,0);
\coordinate (4) at (0,2);
\coordinate (5) at (1,3);
\coordinate (6) at (-1,3);
\filldraw [black] (1) circle (4pt);
\filldraw [black] (2) circle (4pt);
\filldraw [black] (3) circle (4pt);
\filldraw [black] (4) circle (4pt);
\filldraw [black] (5) circle (4pt);
\filldraw [black] (6) circle (4pt);
\draw[->] (1)--(3);
\draw[->] (1)--(6);
\draw[->] (2)--(4);
\draw[->] (5)--(3);
\end{tikzpicture}\\
\hline
\end{tabular}
\end{center}
\caption{All possible $\trtn$ with $1$ through $5$ edges removed from the top and bottom triangle.}
\label{table:G_n,n,noptions}
\end{table}

\begin{lemma} For $n\geq 3$  we have that
\begin{enumerate}
\item $[p_{(3n)}]X_{\trtn}=(-1)^{3n+3}(12n^2-12n+2)$,
\item $[p_{(2n,n)}]X_{\trtn}=(-1)^{3n}(4n^2+6n-7)$ and
\item $[p_{(n^3)}]X_{\trtn}=(-1)^{3n-3}(3n-2)$.
\end{enumerate}
\label{lem:coeff_power_G_n,n,n}
\end{lemma}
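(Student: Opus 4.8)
The plan is to apply Lemma~\ref{lem:stanley}, writing $X_{\trtn}=\sum_{S\subseteq E}(-1)^{|S|}p_{\lambda(S)}$ with $|E|=3n+3$, and to extract the three coefficients by counting the subsets $S$ for which $\lambda(S)$ equals $(3n)$, $(2n,n)$, or $(n^3)$. As in Lemma~\ref{lem:powercoeffgraph}, the first move is to observe that deleting two or more edges from any one of the left, middle, or right $n$-path isolates a sub-path of size strictly between $0$ and $n$, giving an irrelevant $\lambda(S)$; hence we may assume $S$ omits \emph{at most one edge from each $n$-path}, with every remaining deleted edge lying in the top triangle $\{v_1v_{n+1},v_{n+1}v_{2n+1},v_{2n+1}v_1\}$ or the bottom triangle $\{v_nv_{2n},v_{2n}v_{3n},v_{3n}v_n\}$. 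Such an $S$ is then specified by a choice, for each $n$-path, of either ``intact'' or a cut at position $s$ — which splits the path into a \emph{top chunk} of size $s$ containing its top triangle leaf and a \emph{bottom chunk} of size $n-s$ containing its bottom triangle leaf — together with a subset of the six triangle edges to delete.

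We then run a case analysis organized, as in Table~\ref{table:G_n,n,noptions}, by the number $i\in\{0,1,\dots,6\}$ of deleted triangle edges, subdivided by the number $m\in\{0,1,2,3\}$ of cut $n$-paths, using the order-$12$ automorphism group of $\trtn$ (permuting the three $n$-paths, exchanging the two triangles) throughout to group configurations into the isomorphism types of the table and record their multiplicities. The structural fact that drives the enumeration is that, because each $n$-path loses at most one edge, the surviving edges of a relevant spanning subgraph never directly join a top chunk to a bottom chunk of the same cut path, so the pieces are unions of chunks and intact paths that can only assemble into blocks of size $n$, $2n$, or $3n$: two top (or two bottom) chunks of paths cut at \emph{complementary} positions $s$ and $n-s$ fuse through a surviving triangle edge into a block of size $n$; an intact path together with both chunks of a single cut path fuses into a block of size $2n$; and surviving triangle edges among intact paths fuse those whole paths. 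Using this we determine $\lambda(S)$ in each (triangle-pattern, cut-pattern) case and discard those with a piece of size below $n$: the graph induced on the three paths by the surviving triangle edges being connected gives $\lambda(S)=(3n)$; deleting all six triangle edges with all paths intact, together with the configurations that cut two paths at complementary positions while keeping only the matching top and bottom edge of that pair and leaving the third path intact, are the only ways to reach $\lambda(S)=(n^3)$, giving the $1+3(n-1)=3n-2$ subsets behind part~(3); and the remaining admissible configurations — notably those cutting all three paths at positions summing to $n$ or to $2n$ with both triangles connected, which supply the quadratic-in-$n$ contribution — fall under $\lambda(S)=(2n,n)$.

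With all contributing configurations and multiplicities listed, we sum $(-1)^{|S|}=(-1)^{3n+3-i-m}$ over them; the free choices of cut positions supply the $(n-1)$- and $(n-1)(n-2)$-type factors, and after collecting terms the totals are $[p_{(3n)}]X_{\trtn}=(-1)^{3n+3}(12n^2-12n+2)$, $[p_{(2n,n)}]X_{\trtn}=(-1)^{3n}(4n^2+6n-7)$, and $[p_{(n^3)}]X_{\trtn}=(-1)^{3n-3}(3n-2)$. A direct check at $n=3$ against the chromatic symmetric function of $\trt_{3,3,3}$ displayed at the start of this section confirms the bookkeeping.

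The main obstacle will be exactly that bookkeeping: there are many (triangle-pattern, cut-pattern) combinations, and in each the delicate steps are (a) applying the automorphism group without over- or undercounting the configurations it identifies, and (b) deciding correctly whether the surviving edges reassemble the chunks of the cut paths into blocks of size $n$, $2n$, or $3n$ or instead strand a chunk of size below $n$ to be discarded; in particular the $n^2$ terms arise only from configurations cutting two or three $n$-paths at \emph{independent} positions, which are the easiest to miscount. This is precisely where the analysis is ``more substantial and delicate'' than that of Lemma~\ref{lem:powercoeffgraph}, where at most two $n$-paths could be cut and there was a single ``middle'' rather than two interacting triangles.
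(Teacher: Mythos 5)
Your proposal is correct and follows essentially the same route as the paper: apply Lemma~\ref{lem:stanley}, discard any $S$ omitting two or more edges from a single $n$-path, and then run a case analysis over the pattern of deleted triangle edges (grouped up to the symmetry of $\trtn$, exactly as in Table~\ref{table:G_n,n,noptions}) and the cut positions on the paths, with the $(n-1)$- and $(n-1)(n-2)$-type counts arising just as you describe; your identification of the $1+3(n-1)=3n-2$ subsets giving $(n^3)$ and of the ``both triangles still connected'' criterion for the all-three-paths-cut contributions to $(2n,n)$ matches the paper's computation. The only difference is organizational — you index cases by (number of triangle edges removed, number of paths cut) while the paper indexes by $|E|-|S|$ — which does not change the argument.
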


\begin{proof}
To prove this we will use Lemma~\ref{lem:stanley} that considers all subsets of the edge set $E$. 
We are only interested in subsets $S\subseteq E$ that yield $\lambda(S)=(3n)$, $(2n,n)$ or $(n^3)$. Note that all of these have parts  at least $n$ so we will disregard any set $S$ where $\lambda(S)$ has a part smaller than $n$. If  $S$ has two or more edges removed from any of the $n$-paths, then $\lambda(S)$ certainly will have a part   smaller than $n$. Thus we will only consider subsets $S\subseteq E$ that have \emph{at most one edge} removed from any of the $n$-paths. 
In Table~\ref{table:G_n,n,noptions} we have considered all cases of $1$ through $5$ edges removed from the two triangles and have enumerated and collected all isomorphic graphs. This will be especially useful in our delicate case analysis, consisting of 7 cases corresponding to the removal of 0 to 6 edges from $E$. 

First, consider  $|S|=|E|$. This gives us the term $$(-1)^{3n+3}p_{(3n)}.$$ 

Second, consider $|S|=|E|-1$, and note that removing any one of the $3n+3$ edges yields a connected graph, and hence the term $$(-1)^{3n+2}(3n+3)p_{(3n)}.$$

Third, consider $|S|=|E|-2$. If the two removed edges come from the triangles there are  $15$ possibilities and we can see from Table~\ref{table:G_n,n,noptions} that all these possibilities are connected so contributes the term $(-1)^{3n+1}15p_{(3n)}$. 

Say we remove one edge from the triangles and one from the paths. In all 6 identical  possibilities of removing one edge from a triangle we can remove any one of the $3(n-1)$  edges from the $n$-paths and maintain a connected graph so we get the term  $(-1)^{3n+1}18(n-1)p_{(3n)}$. 

Next consider the situation where we remove two edges from the $n$-paths. We noted earlier that these two edges cannot be from the same $n$-path.  There are $\binom{3}{2}$ ways to choose the two $n$-paths and $n-1$ edge choices in each $n$-path. Since the resulting graph is always connected we have the term $(-1)^{3n+1}3(n-1)^2p_{(3n)}$. 

Altogether this case contributes the term $$(-1)^{3n+1}(3n^2+12n)p_{(3n)}.$$

Fourth, consider $|S|=|E|-3$. Now consider the situation of removing those three edges from the triangles. We can see from Table~\ref{table:G_n,n,noptions} that all $20$ possibilities are connected so contribute the term $(-1)^{3n}20p_{(3n)}$. 

Say instead we remove two edges from the triangles and one from the $n$-paths. In the 6 possibilities on the left in  Table~\ref{table:G_n,n,noptions} if we remove an edge from the left $n$-path we disconnect the graph yielding a part smaller than $n$. If instead we remove any one of the $2(n-1)$ edges from the middle or right $n$-paths, then we have a connected graph, which contributes the term $(-1)^{3n}12(n-1)p_{(3n)}$. In the remaining $9$ middle and right possibilities of removing two edges from the triangles  we can remove any one of the $3(n-1)$ edges from the three $n$-paths and still have a connected graph, which contributes the term $(-1)^{3n}27(n-1)p_{(3n)}$. 

Now say that we remove one edge from the triangle and two edges from the $n$-paths. Again, these two edges must be on different $n$-paths and any choice of edges on the two $n$-paths will leave the graph connected. With $6$ ways to remove an edge from the triangle, $\binom{3}{2}$ ways to choose the two $n$-paths, and $(n-1)^2$ ways to choose the edges on the $n$-paths we get the term $(-1)^{3n}18(n-1)^2p_{(3n)}$. 

Finally, consider the situation where we remove all three edges from the $n$-paths. No two of these removed edges are on the same $n$-path so we are removing one edge from each $n$-path. This will certainly disconnect the graph into two pieces. The only two-part partition we are interested in is $(2n,n)$ so we will count the edge removal choices that splits the graph yielding a partition of this type. We will first count the number of possibilities so that the piece connected to the top triangle has  $n$ vertices. Say  we remove an edge on the left $n$-path that results in $i$ vertices from this $n$-path contributing to this top connected piece. Also,  say we remove an edge from  the middle $n$-path so that the middle $n$-path contributes $j$ vertices to the top connected piece. As long as $1\leq i,j \leq n-1$ and $2\leq i+j\leq n-1$ then there exists exactly one edge in the right $n$-path that contributes $n-i-j$ vertices to the top connected piece, which yields our piece with $n$ vertices. The number of choices for $i$ and $j$ is  $\frac{(n-1)(n-2)}{2}$. Since there are equally many choices to instead make the bottom connected piece have $n$ vertices then this  contributes the term $(-1)^{3n}(n-1)(n-2)p_{(2n,n)}$. 

Altogether this case contributes the terms $$(-1)^{3n}(18n^2+3n-1)p_{(3n)}$$and $$(-1)^{3n}(n^2-3n+2)p_{(2n,n)}.$$ 

Fifth, consider $|S|=|E|-4$. There are $15$ possibilities for removing all four of the edges from the triangles. We can see in Table~\ref{table:G_n,n,noptions} that in $12$ of the possibilities the graph remains connected so contributes the term $(-1)^{3n-1}12p_{(3n)}$. In the remaining 3 possibilities the graph becomes an $n$-path and a $2n$-cycle so contributes the term $(-1)^{3n-1}3p_{(2n,n)}$. 

Next consider removing only three edges from the triangles and one edge from the $n$-paths. In the left $12$ possibilities listed in  Table~\ref{table:G_n,n,noptions} we can remove any of the $2(n-1)$ edges from the left and middle $n$-paths and maintain a connected graph.  The removal of any edge from the right $n$-path will yield a part smaller than $n$. In the middle $6$ possibilities listed in Table~\ref{table:G_n,n,noptions} we again can remove any one of the  $2(n-1)$ edges from the left or right $n$-path and maintain a connected graph, but choosing an edge from the middle $n$-path yields a part smaller than $n$.  In the right 2 possibilities in Table~\ref{table:G_n,n,noptions} any edge removed from any $n$-path would yield a part smaller than $n$ so altogether this contributes the term $(-1)^{3n-1}36(n-1)p_{(3n)}$. 

Next say we remove two edges from the triangles and two edges from the $n$-paths. In  Table~\ref{table:G_n,n,noptions} we can see for  the left $6$ possibilities that we can only remove the two edges from the  right and middle $n$-paths and this will split the graph into two pieces. Any choice of one of the $n-1$ edges from the middle $n$-path gives us precisely one choice for an edge in the right $n$-path so that we disconnect the graph to yield  $(2n,n)$. This contributes the term $(-1)^{3n-1}6(n-1)p_{(2n,n)}$. In the remaining 9 middle and right possibilities in Table~\ref{table:G_n,n,noptions} we can choose any two $n$-paths in $\binom{3}{2}$ ways and choose any edge in $(n-1)^2$ ways and still have a connected graph, which contributes the term $(-1)^{3n-1}27(n-1)^2p_{(3n)}$. 

Finally consider the situation when we remove only one edge from the triangles and three edges from the $n$-paths. Very similar to  earlier this breaks the graph into two pieces and there are $(n-1)(n-2)$ ways to choose the edges so that the graph is separated into one piece of size $2n$ and another of size $n$, which contributes the term $(-1)^{3n-1}6(n-1)(n-2)p_{(2n,n)}$. We cannot remove four edges from the $n$-paths else we yield a part smaller than $n$. 

Altogether this case contributes the terms $$(-1)^{3n-1}(27n^2-18n+3)p_{(3n)}$$and $$(-1)^{3n-1}(6n^2-12n+9)p_{(2n,n)}.$$

Sixth, consider $|S|=|E|-5$. If all five edges are removed from the triangles, then we have $6$ possibilities all of which give us a disconnected $n$-path and $2n$-path that contributes the term $(-1)^{3n-2}6p_{(2n,n)}$. 

Say we remove  four edges from the triangles and one from the $n$-paths. The left and middle 12 possibilities in  Table~\ref{table:G_n,n,noptions} will split the graph into two pieces, but not yielding the partition $(2n,n)$. In the right  3 possibilities in Table~\ref{table:G_n,n,noptions} we do not want to remove an edge from the left $n$-path since we would yield a part smaller than $n$, but we can remove any of the $2(n-1)$ other edges from the $n$-paths and  have the graph yield $(2n,n)$, which contributes the term $(-1)^{3n-2}6(n-1)p_{(2n,n)}$. 

Say we remove three edges from the triangles and two from the $n$-paths. In  Table~\ref{table:G_n,n,noptions} we can see with the right 2 possibilities that there is no choice of edges on the $n$-paths that  disconnects the graph yielding parts we are interested in. In the left and middle $18$ possibilities  there are two $n$-paths we can remove the two edges from without automatically yielding a part smaller than $n$. Also, any choice of edges on the $n$-paths splits the graph into two pieces so we need to count the possibilities that result in the partition $(2n,n)$. For any of the   $n-1$ choices for an edge on one $n$-path there is exactly one choice of an edge on the other $n$-path so we partition the graph  to yield $(2n,n)$. Together this contributes the term $(-1)^{3n-2}18(n-1)p_{(2n,n)}$. 

Finally, consider removing two edges from the triangles and three edges from the $n$-paths. 
 For the right and middle $9$ possibilities in Table~\ref{table:G_n,n,noptions} this splits the graph into two pieces and  there are $(n-1)(n-2)$ ways to choose the edges so that the graph is separated into a piece of size $2n$ and another of size $n$ as discussed earlier. In the left  $6$ possibilities in Table~\ref{table:G_n,n,noptions} we would split the graph so that it yields a part smaller than $n$. This contributes the term $(-1)^{3n-2}9(n-1)(n-2)p_{(2n,n)}$. Again, we cannot remove four or more edges from the $n$-paths else we yield a part smaller than $n$. 
 
Altogether this case gives us the term $$(-1)^{3n-2}(9n^2-3n)p_{(2n,n)}.$$

 {Seventh,} consider $|S|=|E|-6$. If we remove all six edges from the triangles we obtain   three disconnected $n$-paths, which contributes the term $(-1)^{3n-3}p_{(n^3)}$.

If we remove five edges from the triangles and one from the $n$-paths, then we can see that in all 6 possibilities in Table~\ref{table:G_n,n,noptions} we will split our graph into three pieces not yielding $(n,n,n)$. 

Say that we remove four edges from the triangles and two from the $n$-paths. In the left and middle 12 possibilities in Table~\ref{table:G_n,n,noptions} we will split the graph into pieces we are not interested in. In the right 3 possibilities in Table~\ref{table:G_n,n,noptions} we split the graph into three pieces and any one choice of the $n-1$ edges in the middle $n$-path will leave us with one choice of an edge in the right $n$-path so that we split our graph into three pieces of size $n$. This gives us the term $(-1)^{3n-3}3(n-1)p_{(n^3)}$. 

Say that we remove three edges from the triangles and three from the $n$-paths. In all 20 possibilities in Table~\ref{table:G_n,n,noptions} we split the graph into pieces of sizes we are not interested in. 

Since we cannot remove four or more edges from the $n$-paths and get pieces of size  at least $n$ altogether this case gives us the term $$(-1)^{3n-3}(3n-2)p_{(n^3)}.$$

No matter how we remove seven or more edges in total we will obtain a part smaller than $n$, so we have   considered all sets $S$ that contribute to the partitions $(3n)$, $(2n,n)$ and $(n^3)$. Adding everything  we get 
\begin{align*}[p_{(3n)}]X_{\trtn}&=(-1)^{3n+3}(1-(3n+3)+(3n^2+12n)-(18n^2+3n-1)+(27n^2-18n+3))\\&=(-1)^{3n+3}(12n^2-12n+2)\end{align*}and
\begin{align*}[p_{(2n,n)}]X_{\trtn}&=(-1)^{3n}((n^2-3n+2)-(6n^2-12n+9)+(9n^2-3n))\\&=(-1)^{3n}(4n^2+6n-7)\end{align*}and
\begin{align*}[p_{(n^3)}]X_{\trtn}&=(-1)^{3n-3}(3n-2).\end{align*}
\end{proof}

Similar to Lemma~\ref{lem:elemcoeffpower} we can prove the following.

\begin{lemma} For $n\geq 1$  we have that
\begin{enumerate}
\item $[e_{(n^3)}]p_{(3n)}=(-1)^{3n-3}n$,
\item $[e_{(n^3)}]p_{(2n,n)}=(-1)^{3n-3}n^2$ and
\item $[e_{(n^3)}]p_{(n^3)}=(-1)^{3n-3}n^3$.
\end{enumerate}
\label{lem:coeff_e[n,n,n]}
\end{lemma}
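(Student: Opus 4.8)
The plan is to imitate the proof of Lemma~\ref{lem:elemcoeffpower}, in each case reading the desired coefficient off Newton's identity, Equation~\eqref{eq:newton}, either directly or by multiplying together single-part contributions.

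For part~(1) I would apply Equation~\eqref{eq:newton} with $i=3n$ and isolate the term indexed by $\mu=(n^3)$, which has $\ell(\mu)=3$ and $m_n=3$ while all other multiplicities vanish; this gives $[e_{(n^3)}]p_{(3n)}=(-1)^{3n-3}\frac{3n(3-1)!}{3!}=(-1)^{3n-3}n$. For parts~(2) and~(3) I would first record, again from Equation~\eqref{eq:newton}, the base case $[e_n]p_n=(-1)^{n-1}\frac{n(1-1)!}{1!}=(-1)^{n-1}n$. Expanding $p_{(n^3)}=p_np_np_n$ in the elementary basis, the only way a product of elementary terms drawn one from each of the three factors can equal $e_{(n^3)}=e_ne_ne_n$ is to take the $e_n$ term from each factor, so $[e_{(n^3)}]p_{(n^3)}=([e_n]p_n)^3=(-1)^{3(n-1)}n^3=(-1)^{3n-3}n^3$. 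Likewise, writing $p_{(2n,n)}=p_{2n}p_n$, a product $e_\mu e_\nu$ with $\mu\vdash 2n$, $\nu\vdash n$ equals $e_{(n^3)}$ only when the combined multiset of parts of $\mu$ and $\nu$ is $\{n,n,n\}$, which forces $\nu=(n)$ and $\mu=(n^2)$; hence $[e_{(n^3)}]p_{(2n,n)}=[e_{(n^2)}]p_{2n}\cdot[e_n]p_n=n\cdot(-1)^{n-1}n=(-1)^{3n-3}n^2$ by Lemma~\ref{lem:elemcoeffpower} and the base case.

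There is no genuine obstacle. The only two points requiring a little care are the sign bookkeeping, where one uses $(-1)^{3n-3}=(-1)^{3(n-1)}=(-1)^{n-1}$ so that all three formulas carry a uniform sign, and the verification that in the products $p_{2n}p_n$ and $p_np_np_n$ no combination of elementary terms from the factors produces $e_{(n^3)}$ except the one that takes an $e_n$ from each factor, which is immediate because a factor equal to the power sum $p_n$ can only contribute a partition of $n$ all of whose parts equal $n$, namely $(n)$.
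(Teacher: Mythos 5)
Your proposal is correct and follows essentially the same route as the paper's proof: part (1) is read directly off Equation~\eqref{eq:newton} with $\mu=(n^3)$, and parts (2) and (3) are obtained by multiplying the single-factor coefficients $[e_n]p_n$ and $[e_{(n^2)}]p_{(2n)}$ from Lemma~\ref{lem:elemcoeffpower}, exactly as the authors do. Your added remark justifying why only the $e_n$ (resp.\ $e_{(n^2)}$) terms of the factors can contribute to $e_{(n^3)}$ makes explicit a point the paper leaves implicit, but it is the same argument.
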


\begin{proof}
Using Equation~\eqref{eq:newton} we have that $[e_{(n^3)}]p_{(3n)}=(-1)^{3n-3}\frac{3n(3-1)!}{3!}=(-1)^{3n-3}n$. 

Recall from Lemma~\ref{lem:elemcoeffpower} and its proof that $[e_{(n^2)}]p_{(2n)}=(-1)^{2n-2}n$ and 
$[e_{n}]p_{n}=(-1)^{n-1}n$. In $p_{(n^3)}=p_np_np_n$ the coefficient of 
$e_{(n^3)}$ is purely determined by the multiplication of the coefficients of $e_{n}$ in  $p_n$, which gives 
$[e_{(n^3)}]p_{(n^3)}=([e_{n}]p_{n})^3=(-1)^{3n-3}n^3$. In $p_{(2n,n)}=p_{(2n)}p_n$ the coefficient of $e_{(n^3)}$ 
is purely determined by the multiplication of the coefficient of $e_{(n^2)}$ in  $p_{(2n)}$ and $e_{n}$ in  $p_{n}$, which gives $[e_{(n^3)}]p_{(2n,n)}=[e_{(n^2)}]p_{(2n)}[e_{n}]p_{n}=(-1)^{3n-3}n^2$. 
\end{proof}

\begin{lemma} The chromatic symmetric function of $\trtn$ for $n\geq3$ is not $e$-positive. In particular, we have that 
$$[e_{(n^3)}]X_{\trtn}=-n(n-1)^2(n-2).$$
\label{lem:trt_notepos}
\end{lemma}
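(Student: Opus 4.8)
The plan is to mirror exactly the argument of Lemma~\ref{lem:sa_notepos} (and Lemma~\ref{lem:as_notepos}), now with three relevant partitions instead of two. First I would invoke Observation~\ref{obs:newton}: the monomial $e_{(n^3)}$ occurs with nonzero coefficient in $p_\lambda$ only when $\lambda\vdash 3n$ and $\lambda\coarsep(n^3)$, and the only such partitions are $(3n)$, $(2n,n)$ and $(n^3)$ (as already noted in the text preceding Lemma~\ref{lem:coeff_power_G_n,n,n}). Writing $X_{\trtn}=\sum_{\lambda\vdash 3n}[p_\lambda]X_{\trtn}\,p_\lambda$ and extracting the coefficient of $e_{(n^3)}$, only these three terms survive, so
$$[e_{(n^3)}]X_{\trtn}=\sum_{\lambda\in\{(3n),(2n,n),(n^3)\}}[e_{(n^3)}]p_\lambda\cdot[p_\lambda]X_{\trtn}.$$

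Next I would substitute the values already established: from Lemma~\ref{lem:coeff_power_G_n,n,n} the three power-sum coefficients $[p_{(3n)}]X_{\trtn}=(-1)^{3n+3}(12n^2-12n+2)$, $[p_{(2n,n)}]X_{\trtn}=(-1)^{3n}(4n^2+6n-7)$, $[p_{(n^3)}]X_{\trtn}=(-1)^{3n-3}(3n-2)$, and from Lemma~\ref{lem:coeff_e[n,n,n]} the three changes-of-basis $[e_{(n^3)}]p_{(3n)}=(-1)^{3n-3}n$, $[e_{(n^3)}]p_{(2n,n)}=(-1)^{3n-3}n^2$, $[e_{(n^3)}]p_{(n^3)}=(-1)^{3n-3}n^3$. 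The one point demanding a little care is the sign bookkeeping: $(-1)^{3n-3}(-1)^{3n+3}=(-1)^{6n}=1$, while $(-1)^{3n-3}(-1)^{3n}=(-1)^{6n-3}=-1$, and $(-1)^{3n-3}(-1)^{3n-3}=(-1)^{6n-6}=1$. Hence all the global signs cancel except for a single minus on the middle term, giving
$$[e_{(n^3)}]X_{\trtn}=n(12n^2-12n+2)-n^2(4n^2+6n-7)+n^3(3n-2).$$

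Finally I would expand and factor: the right-hand side equals $-n^4+4n^3-5n^2+2n=-n(n^3-4n^2+5n-2)$, and since $n=1$ is a root of the cubic we get $n^3-4n^2+5n-2=(n-1)(n^2-3n+2)=(n-1)^2(n-2)$, so $[e_{(n^3)}]X_{\trtn}=-n(n-1)^2(n-2)$. For $n\geq 3$ this is strictly negative, so $X_{\trtn}$ cannot be a nonnegative linear combination of the $e_\lambda$, i.e.\ $\trtn$ is not $e$-positive. I expect no genuine obstacle here: all the substantive work (the delicate deletion–contraction case analysis) lives in Lemma~\ref{lem:coeff_power_G_n,n,n}, which we may assume, so the only thing to get right in this lemma is the parity arithmetic and the final factorization, both routine.
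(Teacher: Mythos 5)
Your proposal is correct and follows exactly the paper's own argument: isolate the three partitions $(3n)$, $(2n,n)$, $(n^3)$ via Observation~\ref{obs:newton}, combine Lemmas~\ref{lem:coeff_power_G_n,n,n} and \ref{lem:coeff_e[n,n,n]}, and factor $-n^4+4n^3-5n^2+2n=-n(n-1)^2(n-2)$. Your explicit sign bookkeeping and the final negativity check for $n\geq 3$ are both accurate, so there is nothing to add.
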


\begin{proof}
By Observation~\ref{obs:newton} we can see that  $e_{(n^3)}$ has a non-zero coefficient in $p_{\lambda}$ only for  $\lambda\vdash 3n$ with $\lambda\coarsep (n^3)$. There are only three partitions $\lambda\vdash 3n$ where $\lambda\coarsep (n^3)$, namely $(3n)$, $(2n,n)$ and $(n^3)$. 

Since 
$$X_{\trtn}=\sum_{\lambda\vdash 3n}[p_{\lambda}]X_{\trtn} p_{\lambda}$$
the coefficient of $e_{(n^3)}$ in $X_{\trtn}$ only arises from the  $p_{(3n)}$, $p_{(2n,n)}$ and $p_{(n^3)}$ terms. In particular, 
$$[e_{(n^3)}]X_{\trtn}=[e_{(n^3)}]p_{(3n)}\cdot [p_{(3n)}]X_{\trtn}+[e_{(n^3)}]p_{(2n,n)}\cdot [p_{(2n,n)}]X_{\trtn}+[e_{(n^3)}]p_{(n^3)}\cdot[p_{(n^3)}]X_{\trtn}.$$
Using  Lemma~\ref{lem:coeff_power_G_n,n,n} and Lemma~\ref{lem:coeff_e[n,n,n]}  we therefore have
\begin{align*}
[e_{(n^3)}]X_{\trtn}&=n\cdot(12n^2-12n+2)-n^2\cdot(4n^2+6n-7)+n^3\cdot(3n-2)\\
&=-n(n-1)^2(n-2).
\end{align*}
\end{proof}


We can now identify our third family of graphs that are \ccf, are furthermore claw-free, and whose chromatic symmetric functions are not $e$-positive.

\begin{theorem}\label{the:trtnotpositive} The graphs $\trtn$ for all $n\geq 3$ are \ccf, claw-free and not $e$-positive.
\end{theorem}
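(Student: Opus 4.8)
The plan is to assemble Theorem~\ref{the:trtnotpositive} from the structural and computational facts already in place for this family, exactly in the spirit of Theorems~\ref{the:sanotpositive} and~\ref{the:asnotpositive}. For the two claw-related assertions I would simply invoke Lemma~\ref{lem:trt_ccf}: its proof already partitions the edge set of $\trt_{a,b,c}$ into the top triangle, the bottom triangle (each edge-inducing a $K_3$), and the individual path edges (each edge-inducing a $K_2$), with no vertex in more than two blocks, so Beineke's criterion (Lemma~\ref{lem:beineke}) gives claw-freeness; and it verifies the Brouwer--Veldman test (Lemma~\ref{lem:BV}) by checking that any independent triple either has two non-adjacent vertices on a common $n$-path, whose deletion already breaks that path, or meets all three $n$-paths, whose deletion disconnects the graph unless the triple lies inside a single triangle --- which an independent set cannot do. Specializing $a=b=c=n$ gives the \ccf\ and claw-free parts for $\trtn$ with $n\geq 3$.

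For the failure of $e$-positivity I would pass, via Observation~\ref{obs:newton} and Lemma~\ref{lem:stanley}, to the coefficient $[e_{(n^3)}]X_{\trtn}$, which depends only on the $p_\lambda$-coefficients for those $\lambda\vdash 3n$ with $\lambda\coarsep (n^3)$, namely $\lambda\in\{(3n),(2n,n),(n^3)\}$. Combining the power-sum coefficients from Lemma~\ref{lem:coeff_power_G_n,n,n} with the change-of-basis coefficients from Lemma~\ref{lem:coeff_e[n,n,n]} as in Lemma~\ref{lem:trt_notepos} yields
$$[e_{(n^3)}]X_{\trtn}=n\,(12n^2-12n+2)-n^2\,(4n^2+6n-7)+n^3\,(3n-2)=-n(n-1)^2(n-2),$$
which is strictly negative for every $n\geq 3$. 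Since the $e_\lambda$ form a basis, a single negative coefficient forces $X_{\trtn}$ to fail $e$-positivity, and the theorem then follows by combining this with Lemma~\ref{lem:trt_ccf}.

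The real obstacle is not in the theorem itself but upstream, in Lemma~\ref{lem:coeff_power_G_n,n,n}. One must carry out the seven-case spanning-subgraph analysis --- indexed by $|E|-|S|\in\{0,1,\dots,6\}$ --- recording for each deletion pattern whether the result is connected, whether it splits into the two-block shape $(2n,n)$, or whether it splits into $(n^3)$, while discarding every $S$ with a component of fewer than $n$ vertices; those with two edges removed from a common $n$-path are dismissed immediately. The bookkeeping is more delicate than in the saltire case because deletions from the two triangles and from the three $n$-paths interact asymmetrically (see Table~\ref{table:G_n,n,noptions}), and the count of ways to sever off a piece of exactly $n$ vertices when cutting all three $n$-paths is the triangular number $\tfrac{(n-1)(n-2)}{2}$ rather than a plain product. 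Once those three coefficients are secured, Lemma~\ref{lem:coeff_e[n,n,n]} is a routine application of Newton's identity~\eqref{eq:newton}, and the closing identity above is a single line of algebra.
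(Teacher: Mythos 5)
Your proposal is correct and follows the paper's own route exactly: the theorem is obtained by combining Lemma~\ref{lem:trt_ccf} (claw-freeness via Beineke's criterion and the \ccf\ property via Brouwer--Veldman) with Lemma~\ref{lem:trt_notepos}, whose negative coefficient $[e_{(n^3)}]X_{\trtn}=-n(n-1)^2(n-2)$ comes from Lemmas~\ref{lem:coeff_power_G_n,n,n} and~\ref{lem:coeff_e[n,n,n]} exactly as you describe. Your algebra checks out, and your identification of the seven-case edge-deletion analysis in Lemma~\ref{lem:coeff_power_G_n,n,n} as the real technical burden matches the paper's presentation.
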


\begin{proof} This follows immediately from Lemmas~\ref{lem:trt_ccf} and \ref{lem:trt_notepos}.
\end{proof}

\begin{remark}\label{rem:inc} One might ask whether triangular tower graphs are also incomparability graphs, so as to also potentially be a counterexample to Stanley's $(\mbox{\textbf{3}}+\mbox{\textbf{1}})$-free conjecture \cite[Conjecture 5.1]{Stan95}. However, it is straightforward to check that triangular tower graphs are not incomparability graphs.
\end{remark}

We conclude by conjecturing that the triangular tower graphs $\trtn$ for $n\geq 3$ are in some sense a minimal family of graphs that are \ccf, claw-free and whose chromatic symmetric functions are not $e$-positive. More precisely, we conjecture that there do not exist graphs with 10 or 11 vertices that are  \ccf, claw-free and whose chromatic symmetric functions are not $e$-positive. One motivation for this conjecture is the scarcity of graphs that are \ccf\ and whose chromatic symmetric function's expansion into elementary symmetric functions has negative coefficients. For $N=6$ only 4 of 112 connected graphs satisfy this. For $N=7$ this becomes 7 of 853 and for $N=8$ it is 27 of 11,117. Also the negative terms that we have identified are almost the \emph{only} negative terms in the elementary symmetric function expansion. For example, of the 293 terms in $X_{\trt _{7,7,7}}$ the only negative term is the one we identified, namely $-1260 e_{(7^3)}$. Of the 564 terms in $X_{\trt _{8,8,8}}$ the only negative term other than the one we identified is $-1944 e_{(4^6)}$, while of the 1042 terms in $X_{\trt _{9,9,9}}$ it is $-768 e_{(3^9)}$.

\bigskip
\footnotesize
\noindent\textit{Acknowledgments.}
The authors would like to thank Omar Antol\'{i}n Camarena and Jair Taylor for sharing their code, which we incorporated into ours when generating our extensive data sets in Sage \cite{Sage}. They would also like to thank Miki Racz for translating \cite[Theorem 1]{Krausz} and Richard Stanley for supportive conversations.  {Lastly, they would like to thank the referee for their very thoughtful and valuable suggestions that strengthened the paper.}

All three authors were supported  in part by the National Sciences and Engineering Research Council of Canada. This work was supported by the Canadian Tri-Council Research Support Fund.

\end{document}